\DeclareSymbolFont{cmbrightop}{OT1}{cmbr}{m}{n}
\DeclareMathSymbol{\mPi}{\mathalpha}{cmbrightop}{5}
\DeclareMathOperator{\trace}{tr}
\newcommand{\Psih}{\Psi_h}
\renewcommand{\P}{P} 
\newcommand{\Q}{Q}
\newcommand{\dis}{\displaystyle}
\newcommand{\uh}{u_h}
\newcommand{\vh}{v_h}
\newcommand{\intQ}{\dis \int_\Q}
\newcommand{\intdQ}{\dis \int_{\partial\Q}}
\newcommand{\mintQ}{\displaystyle\dfrac{1}{|\Q|}\intQ}
\newcommand{\mintdQ}{\displaystyle\dfrac{1}{|\partial\Q|}\intdQ}
\newcommand{\intP}{\dis\int_P}
\newcommand{\dx}{\,\text{d}\boldsymbol{{x}}}  
\newcommand{\ds}{\,\text{d}s}
\newcommand{\myphi}{\varphi}
\newcommand{\phii}{\myphi_i}
\newcommand{\phij}{\myphi_j}
\newcommand{\phik}{\myphi_k}
\newcommand{\phil}{\myphi_\ell}
\newcommand{\n}{\boldsymbol{n}}
\renewcommand{\ni}{\n_i}
\newcommand{\ei}{\e{i}}
\newcommand{\eim}{\e{i-1}}
\newcommand{\dRi}{\dR{i}}
\newcommand{\dRj}{\dR{j}}
\newcommand{\di}{\d{i}}
\renewcommand{\dj}{\d{j}}
\newcommand{\NV}{{N_\P}}
\newcommand{\dof}{\text{dof}}
\newcommand{\PZ}{P_0}
\newcommand{\PZt}{\widetilde{P}_0}
\newcommand{\Vi}{V_i}  
\newcommand{\Vip}{V_{i+1}}
\newcommand{\vK}{\boldsymbol{\kappa}}
\newcommand{\vI}{\boldsymbol{I}}
\newcommand{\vgamma}{\boldsymbol{\gamma}}
\newcommand{\x}[1]{x_{#1}}
\newcommand{\y}[1]{y_{#1}}
\renewcommand{\phi}[1]{\myphi_{#1}}
\newcommand{\PN}[1]{\mPi^\nabla_{#1}}
\newcommand{\PI}{\text{I}}
\newcommand{\PZZ}{\mPi^0_0}
\renewcommand{\d}[1]{\boldsymbol{d}_{#1}}
\newcommand{\dR}[1]{\boldsymbol{d}^\perp_{#1}}
\newcommand{\e}[1]{\boldsymbol{e}_{#1}}
\newcommand{\vx}{\boldsymbol{x}}
\newcommand{\va}{\boldsymbol{a}}
\newcommand{\mK}{\mathsf{K}}
\newcommand{\mA}{\mathsf{A}}
\newcommand{\mB}{\mathsf{B}}
\newcommand{\mC}{\mathsf{C}}
\newcommand{\tauVEM}{\tau_{\textsc{\tiny VEM}}}
\newcommand{\tikzAngleOfLine}{\tikz@AngleOfLine}
\def\tikz@AngleOfLine(#1)(#2)#3{%
  \pgfmathanglebetweenpoints{%
    \pgfpointanchor{#1}{center}}{%
    \pgfpointanchor{#2}{center}}
  \pgfmathsetmacro{#3}{\pgfmathresult}%
}
\newcommand{\mKVEM}{\mK_{\textsc{VEM}}}
\newcommand{\mKc}{\mK_{\textsc{C}}}
\newcommand{\mKs}{\mK_{\textsc{S}}}
\newcommand{\calS}{\mathcal{S}}
\begin{document}
\mainmatter              
\title{Quantitative study of the stabilization 
        parameter \\ in the virtual element method}
        

%
\titlerunning{Stability parameter in the VEM}  
%
\author{Alessandro Russo\inst{1,2} \and N. Sukumar\inst{3}}
\authorrunning{Alessandro Russo et al.} 
%
\tocauthor{Alessandro Russo, and N. Sukumar}
\institute{University of Milano-Bicocca, 20133 Milano, Italy,
\and
IMATI-CNR, 27100 Pavia, Italy\\
\email{alessandro.russo@unimib.it}\and
University of California, Davis CA 95616, USA\\
\email{nsukumar@ucdavis.edu}}

\maketitle              

\begin{abstract}
The choice of stabilization term is a critical component of the virtual element method (VEM).
However, the theory of VEM provides only asymptotic guidance for selecting the stabilization term, which ensures convergence as the mesh size approaches zero, but does not provide a unique prescription for its exact form.  
Thus, the selection of a suitable stabilization term is often 
guided by 
numerical experimentation and analysis of the resulting 
solution, including factors such as stability, accuracy, and efficiency.
In this paper, we establish a new 
link between VEM and 
generalized barycentric coordinates,
in particular 
isoparametric finite elements as a specific case. This connection enables the interpretation of the 
stability as the energy of a particular function in the 
discrete space, commonly known as the `hourglass mode.' Through this approach, this study 
sheds light on how the virtual element 
solution depends on the stabilization term, providing insights into the behavior of the method in more general 
scenarios.

\keywords{generalized barycentric coordinates, 
finite element method, virtual element method, hourglass
modes, stabilization}
\end{abstract}

\section{Introduction}

The virtual element method (VEM)~\cite{volley,projectors} is a stabilized Galerkin method that is accurate and robust on polygonal and polyhedral meshes.  The first-order VEM on simplices is identical to linear finite elements. Polygonal finite elements (see~\cite{Hormann:2017:GBC}) are based
on generalized barycentric coordinates such as
Wachspress basis (shape) functions~\cite{Wachspress:2016:RBG}
and mean value coordinates~\cite{Floater:2003:MVC,Floater:2015:GBC}. On
a quadrilateral, isoparametric finite element
shape functions are also an instance of GBCs. 

In this paper, we present new results over the quadrilateral that provide clearer connections of the finite element
method (FEM) and polygonal FEM to the virtual element method. A stabilization parameter is needed to ensure that the stiffness matrix in the VEM is consistent and stable (invertible). As noted in~\cite{hourglass}, this mirrors the development of hourglass finite elements over the four-node quadrilateral~\cite{Flanagan:1981:USH,TB:book}.
We first show that the stiffness matrix for the diffusion equation on any (convex or nonconvex) 
quadrilateral can be written as the sum of two 
contributions: a consistency matrix 
$\mA$ that is
exactly computable and a stabilization matrix that has
the form $\tau \mB$, where 
$\mB$ is known and the scalar $\tau$
is in fact the hourglass function associated with the 
shape functions of the four-node quadrilateral~\cite{hourglass} (see Section 2). In
Section 3, we compute values for $\tau$ on the square
and parallelogram for isoparametric FEM and Wachspress shape
functions. The decomposition of the element stiffness matrix
in the VEM is precisely of the form $\mA
+ \tau \mB$, where $\tau$ is set to 1, which is
elaborated in Section 4.  We present two numerical examples
in Section 5, and show that the standard
value of $\tau$ in the VEM
leads to a convergent scheme for
the diffusion equation.
  
\section{GBCs on a quadrilateral for the diffusion
         equation}\label{sec:Q4-Poisson}

Let $\Q$ be a quadrilateral with vertices 
$\Vi=(x_i,y_i)$, $i=1,2,3,4$, and let $\{\phii\}_{i=1}^4$
be a set of generalized barycentric coordinates such as
isoparametric bilinear FEM, harmonic, Wachspress or mean value coordinates.
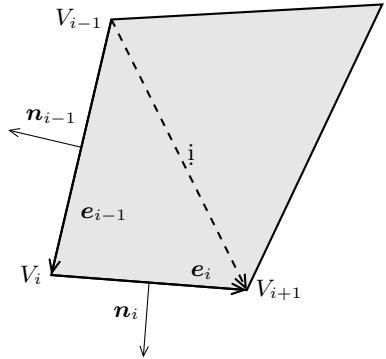
\begin{figure}[ht]
\begin{center}
\begin{tikzpicture}[scale=2]
\coordinate (v1) at (0.1,0.1);
\coordinate (v2) at (-0.5,-1);
\coordinate (v3) at (2,-0.7);
\coordinate (v4) at (2.2,0.3);
\coordinate (v5) at (1,2);
\coordinate (v6) at (-0.8,1.9);
\coordinate (v7) at (-1.2,0.2);
\coordinate (v67) at ($0.5*(v6)+0.5*(v7)$);
\coordinate (v71) at ($0.5*(v7)+0.5*(v1)$);
\coordinate (v16) at ($0.5*(v1)+0.5*(v6)$);
\draw [fill=gray!20] [thick] (v1) -- (v5) -- (v6) -- (v7) -- cycle;
\draw [thick] [-angle 45] (v6) -- (v7);
\draw [thick] [-angle 45] (v7) -- (v1);
\node [right] at (v1) {$V_{i+1}$};
\node [left] at (v6) {$V_{i-1}$};
\node [left] at (v7) {$V_i$};
\draw [thick] [dashed] [-angle 45] (v6) -- (v1);
\node [right] at (v16) {$\di$};
\tikzAngleOfLine(v6)(v7){\angle};
\draw[-angle 45] (v67) -- ++(\angle-90:0.5);
\tikzAngleOfLine(v7)(v1){\angle};
\draw[-angle 45] (v71) -- ++(\angle-90:0.5);
\node at ($(v1)+(-0.3,0.12)$) {$\ei$};
\node at ($(v7)+(0.35,0.4)$) {$\e{i-1}$};
\node at ($(v67)+(-0.2,0.2)$) {$\n_{i-1}$};
\node at ($(v71)-(0.15,0.2)$) {$\n_{i}$};
\end{tikzpicture}
\end{center}
\caption{The quadrilateral $\Q$.}
\label{fig:Q}
\end{figure}
Let $\vK$ be a constant symmetric positive-definite 
$2\times 2$ matrix
on the element $\Q$.
Then the element
stiffness matrix for the diffusion operator  
is defined by
\begin{equation}
\label{eq:stiffness-matrix}
\mK_{ij}:=\intQ\vK\nabla\phij\cdot\nabla\phii\dx.
\end{equation}
We will prove the following structure theorem:
\begin{theorem}
The matrix $\mK$ can be written as
\begin{equation}
\label{eq:identity}
\mK = \mA + \tau \mB ,
\end{equation}
where
\begin{itemize}
\item[$\bullet$]
$\mA$ is a $4\times 4$ matrix that depends only on the geometry 
of the quadrilateral $\Q$ and on the diffusion matrix $\vK$;
\item[$\bullet$]
$\mB$ is a $4\times 4$ matrix that depend only on the
geometry of the quadrilateral $\Q$; furthermore, 
$\mB$ is the same matrix for all parallelograms;
\item[$\bullet$]
$\tau$ is the energy of the function $\Psih$
of our local space
whose value at vertex $\Vi$ is 
$\displaystyle\dfrac{(-1)^i}{2}$, and is defined by
\begin{equation}
\tau:=\intQ\vK\nabla\Psih\cdot\nabla\Psih \dx.
\end{equation}
Note that 
\begin{equation}
\label{eq:defPsih}
\Psih = \dfrac{1}{2}\,(-\phi1+\phi2-\phi3+\phi4).
\end{equation}
The function $\Psih$ is an hourglass mode (see \cite{hourglass}).
The coefficient $\tau$ is the only term in~\eqref{eq:identity}
that depends on the explicit form of the basis functions.
\end{itemize}
\end{theorem}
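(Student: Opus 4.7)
\medskip

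\noindent\textbf{Proof plan.}
The plan is to exploit the linear-precision property of any standard family of generalized barycentric coordinates, which gives $\mathcal{P}_1(\Q)\subset V_h:=\mathrm{span}\{\phi1,\phi2,\phi3,\phi4\}$. Because $V_h$ has dimension $4$ and $\mathcal{P}_1(\Q)$ has dimension $3$ for any non-degenerate $\Q$, a direct-sum decomposition
\[
V_h=\mathcal{P}_1(\Q)\oplus\mathrm{span}(\Psih)
\]
is available as soon as one checks that $\Psih$ is not itself affine---equivalently, that the nodal vector $\mathbf{h}:=(-\tfrac12,\tfrac12,-\tfrac12,\tfrac12)\T$ does not lie in the span of $(1,1,1,1)\T$, $(x_1,\ldots,x_4)\T$, and $(y_1,\ldots,y_4)\T$, a short geometric check. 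First I would translate this splitting into nodal coordinates: there is a unique $\boldsymbol{\lambda}\in\Reals^4$, depending only on the vertices of $\Q$, with $\boldsymbol{\lambda}\T(1,1,1,1)\T=\boldsymbol{\lambda}\T(x_1,\ldots,x_4)\T=\boldsymbol{\lambda}\T(y_1,\ldots,y_4)\T=0$ and $\boldsymbol{\lambda}\T\mathbf{h}=1$, so that every $u_h\in V_h$ admits the unique decomposition $u_h=u_h^L+\alpha(\mathbf{u})\Psih$ with $u_h^L\in\mathcal{P}_1(\Q)$ and $\alpha(\mathbf{u}):=\boldsymbol{\lambda}\T\mathbf{u}$.

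\medskip

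\noindent Next I would expand the bilinear form into four pieces,
\begin{multline*}
\mathbf{v}\T\mK\mathbf{u} = \int_\Q\vK\nabla u_h^L\cdot\nabla v_h^L\dx + \alpha(\mathbf{v})\int_\Q\vK\nabla u_h^L\cdot\nabla\Psih\dx \\
+ \alpha(\mathbf{u})\int_\Q\vK\nabla\Psih\cdot\nabla v_h^L\dx + \alpha(\mathbf{u})\alpha(\mathbf{v})\,\tau,
\end{multline*}
and treat them one by one. Constancy of $\nabla u_h^L$ and $\nabla v_h^L$ reduces the first summand to $|\Q|\,\vK\nabla u_h^L\cdot\nabla v_h^L$, while the divergence theorem rewrites the two cross integrals via $\int_\Q\nabla\Psih\dx=\int_{\partial\Q}\Psih\,\n\ds$. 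Since $\Psih|_{\partial\Q}$ is the piecewise-linear interpolant of the values $\pm\tfrac12$ at the vertices---as holds for isoparametric, Wachspress, mean-value, and harmonic coordinates, each of which is linear along edges---this boundary integral is an explicit geometric vector. Collecting these three contributions gives $\mathbf{v}\T\mA\mathbf{u}$ with $\mA$ depending only on the vertices of $\Q$, on $\vK$, and on the geometric vector $\boldsymbol{\lambda}$, exactly as the theorem requires.

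\medskip

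\noindent The remaining piece is $\alpha(\mathbf{u})\alpha(\mathbf{v})\tau=\tau\,\mathbf{v}\T(\boldsymbol{\lambda}\boldsymbol{\lambda}\T)\mathbf{u}$, which forces the identification $\mB:=\boldsymbol{\lambda}\boldsymbol{\lambda}\T$, a rank-one matrix determined by the vertex positions alone. For the parallelogram claim, observe that the identity $V_1-V_2+V_3-V_4=\mathbf{0}$ characterizing parallelograms is exactly what makes the universal choice $\boldsymbol{\lambda}_0:=\tfrac12(-1,1,-1,1)\T$ satisfy all four defining conditions for $\boldsymbol{\lambda}$; by uniqueness $\boldsymbol{\lambda}=\boldsymbol{\lambda}_0$, and so $\mB=\boldsymbol{\lambda}_0\boldsymbol{\lambda}_0\T$ is one and the same matrix for every parallelogram. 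I expect the main technical hurdle to be the cross-term bookkeeping---confirming that the two mixed terms combine cleanly with the purely linear contribution into a single consistency matrix $\mA$, with no hourglass information secretly leaking into the stabilization piece---together with the edge-linearity assumption that makes $\int_{\partial\Q}\Psih\,\n\ds$ genuinely explicit. Everything else is linear algebra driven by the direct-sum decomposition.
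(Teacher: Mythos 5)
Your argument is correct and ultimately produces the same $\mA$, $\mB$ and $\tau$ as the paper, but the route is organized differently. The paper begins with the $L^2$-orthogonal splitting $\nabla\vh=\PZZ\nabla\vh+(\PI-\PZZ)\nabla\vh$, so that no cross terms ever appear, and then identifies the fluctuation part by expanding each $\phii$ in the basis $\{1,x,y,\Psih\}$ via Cramer's rule, which yields the explicit coefficients $T_i'=(-1)^i T_i/|\Q|$; your vector $\boldsymbol{\lambda}$ is precisely $(T_1',\dots,T_4')\T$, so your $\mB=\boldsymbol{\lambda}\boldsymbol{\lambda}\T$ is the paper's $\vgamma\vgamma\T$. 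You instead start from the direct sum $V_h=\mathcal{P}_1\oplus\mathrm{span}\{\Psih\}$ and characterize $\boldsymbol{\lambda}$ implicitly through orthogonality to the nodal vectors of $1,x,y$; this buys a slicker parallelogram argument (the affine relation $V_1-V_2+V_3-V_4=0$ forces $\boldsymbol{\lambda}=\tfrac12(-1,1,-1,1)\T$) but leaves $\mB$ without a closed form for a general quadrilateral, which the theorem does not require but the paper supplies. The one step you should make explicit is that your two cross terms are not merely ``an explicit geometric vector'' to be absorbed into $\mA$: they vanish identically, because $\Psih$ is linear on each edge with endpoint values $\pm\tfrac12$ of alternating sign, so $\int_{\ei}\Psih\ds=0$ on every edge and hence $\intQ\nabla\Psih\dx=\intdQ\Psih\,\n\ds=0$. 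This is exactly why your $u_h^L$ coincides (up to an additive constant) with the paper's $\PN1\uh$ and why your $\mA$ reduces to the paper's consistency matrix $\mA_{ij}=\frac{1}{4|\Q|}\,\vK\dRj\cdot\dRi$; were that integral nonzero, your $\mA$ would still depend only on the geometry, on $\vK$ and on the universal boundary trace of $\Psih$, so the theorem would survive, but the cross-term bookkeeping you flag as the main hurdle in fact evaporates.
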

\begin{proof}
Let $\PZZ$ be the $L^2$ projection onto constants:
\begin{equation*}
\PZZ w := \mintQ w\dx.
\end{equation*}
When the argument is a vector, $\PZZ$ is applied componentwise.
We start from the identity
\begin{equation}
\label{eq:orthogonal-decomposition}
\intQ\vK\nabla u\cdot\nabla v\dx = 
\intQ\vK\PZZ\nabla u\cdot\PZZ\nabla v\dx+
\intQ\vK(\PI-\PZZ)\nabla u\cdot(\PI-\PZZ)\nabla v\dx ,
\end{equation}
which holds true for $u,v\in H^1(\P)$ ({recall}
that $\vK$ is constant on $\Q$).

\subsection{The matrix $\mA$}
%
The matrix $\mA$ of~\eqref{eq:identity} is simply given by the first term
of~\eqref{eq:orthogonal-decomposition}
with $u=\phij$, $v=\phii$:
\begin{equation*}
\mA_{ij} = 
\intQ\vK\PZZ\nabla\phij\cdot\PZZ\nabla\phii\dx
=
\dfrac{1}{|Q|}\,
\vK\left[\intQ\nabla\phij\dx\right]
\cdot
\left[\intQ\nabla\phii\dx\right].
\end{equation*}
Observe that $\mA_{ij}$ is readily computable and does not
depend on the explicit form of the basis functions, since 
by Gauss's formula
\begin{equation}
\label{eq:gradphi}
\intQ\nabla\phii\dx=\intdQ\phii\n\ds=\dfrac{1}{2}\,\dRi ,
\end{equation}
where $\di$ is the vector joining $V_{i-1}$ with $V_{i+1}$
(a diagonal of the quadrilateral, see Fig. \ref{fig:Q}) and $\perp$ denotes clockwise rotation
of $90^\circ$.
Hence,
\begin{equation}
\mA_{ij} = \dfrac{1}{4|\Q|}\,\vK\dRj\cdot\dRi.
\end{equation}
Since in a quadrilateral we have $\d3=-\d1$ and $\d4=-\d2$, the matrix $\mA$ has
a nice block structure:
\begin{equation*}
\mA=\begin{bmatrix}+\mC & -\mC\\ -\mC & +\mC\end{bmatrix}
\quad\text{with}\quad
\mC:=
\begin{bmatrix}
\vK\dR1\cdot\dR1 & \vK\dR1\cdot\dR2\\
\vK\dR1\cdot\dR2 & \vK\dR2\cdot\dR2
\end{bmatrix}.
\end{equation*}
\begin{remark}
Note that if $\vK=\kappa\vI$, then we can remove the rotation:
\begin{equation*}
\mA_{ij} = \dfrac{1}{4|\Q|}\,\kappa\,\dj\cdot\di.
\end{equation*}
\end{remark}

\subsection{The matrix $\mB$}
Now we turn our attention to the second term of \eqref{eq:orthogonal-decomposition}.
The key idea is to write 
\begin{equation*}
\PZZ\nabla w
\quad\text{as}\quad 
\nabla\PN1 w ,
\end{equation*}
where $\PN1 w$ is a projection of $w$ onto linear polynomials.
More precisely, given any function $\vh$ in our local 
space (i.e., a linear combination of the basis functions $\phii$) we want to define a projection
$\PN1\vh$ onto linear polynomials such that:
\begin{itemize}
\item[$\bullet$]
the gradient of $\PN1\vh$ is the $L^2$ projection of the gradient 
of $\vh$, i.e.,
\begin{equation}
\label{eq:defPN1}
\nabla\PN1\vh=\PZZ\nabla\vh=\mintQ\nabla\vh\dx;
\end{equation}
\item[$\bullet$]
$\PN1\vh$ depends only on the value of $\vh$ on the
boundary of $\Q$ (hence it is the same for all generalized barycentric coordinates).
\end{itemize}
We start by noting that \eqref{eq:defPN1} defines the value of the gradient of
$\PN1\vh$, and so it determines $\PN1\vh$ up to a constant:
\begin{equation}
\label{eq:PN}
\PN1\vh = \left(\mintQ\nabla\vh\dx\right)\cdot\vx+\PZt\vh,
\end{equation}
where $\PZt$ is a projection onto constant functions to be fixed.
Now we impose that $\PN1$ is a projection onto linear polynomials, i.e.,
\begin{equation*}
\text{if}\quad\ell(\vx):=\va\cdot\vx + b\quad\text{then}\quad\PN1\ell=\ell.
\end{equation*}
Since $\nabla\ell=\va$, we have
\begin{equation*}
\PN1\ell=\va\cdot\vx + {\widetilde P}_0\ell=\va\cdot\vx + {\widetilde P}_0(\va\cdot\vx + b) = 
\va\cdot\vx + {\widetilde P}_0(\va\cdot\vx) + b = \ell + \va\cdot {\widetilde P}_0\vx.
\end{equation*}
Hence the projection $\PZt$ must satisfy
\begin{equation*}
\PZt\vx=0,\quad\text{i.e.,}\quad \PZt x=0\text{ and }\PZt y=0.
\end{equation*}
A way to impose this condition is to start from an arbitrary
projection onto constants, say $\PZ$, and then define
\begin{equation}
\label{defPZt}
\PZt\vh:=\PZ\vh - \left(\mintQ\nabla\vh\dx\right)\cdot\PZ\vx.
\end{equation}
We end up with the explicit formula
\begin{equation}
\label{explicit-PN1}
\PN1\vh = \left(\mintQ\nabla\vh\dx\right)\cdot(\vx-\PZ\vx)+\PZ\vh,
\end{equation}
where $\PZ$ is an arbitrary projection onto constants.

\begin{remark}
An alternative way to define $\PZt$ from $\PZ$ is to start from \eqref{eq:PN}
and impose the condition
\begin{equation*}
\PZ(\PN1\vh-\vh)=0.
\end{equation*}
In fact, from \eqref{eq:PN} we have
\begin{multline*}
\PZ(\PN1\vh)=
\left(\mintQ\nabla\vh\dx\right)\cdot P_0\vx + P_0{\widetilde P}_0\vh\\
=\left(\mintQ\nabla\vh\dx\right)\cdot P_0\vx + {\widetilde P}_0\vh=P_0\vh
\end{multline*}
from which we get \eqref{defPZt}.
\end{remark}

In order to be able to compute $\PN1\vh$ without actually 
knowing $\vh$ in the interior of $P$, the projector
$\PZ\vh$ must be computable from the boundary values of $\vh$
only. The two most natural choices are:
\begin{itemize}
\item[$\bullet$]
$P_0\vh:=\dfrac{1}{\NV}\sum_{i=1}^{4}\vh(V_i)$
\quad(mean on the vertices of $\Q$).

\newcommand{\Vbar}{\overline{V}}
\newcommand{\Vtilde}{\widetilde{V}}

\medskip\noindent
We have
\begin{equation*}
P_0\vx = \dfrac{1}{4}\sum_{i=1}^{4}\Vi=:\Vbar
\quad\text{(vertex center)}.
\end{equation*}
On taking $\vh=\phii$, recalling \eqref{eq:gradphi} and observing that
\begin{equation*}
\PZ\phii=\dfrac{1}{4},
\end{equation*}
we have
\begin{equation}
\label{PZ-1}
\PN1\phii=\dfrac{1}{2|\Q|}(\vx-\Vbar)\cdot\dRi+\dfrac{1}{4}.
\end{equation}
\item[$\bullet$]
$P_0\vh=\mintdQ\vh\ds$\quad(mean on the boundary of $\Q$).

\medskip\noindent
We have
\begin{equation*}
P_0\vx = 
\dfrac{1}{|\partial\Q|}\sum_{i=1}^{4}
\dfrac{\Vi+\Vip}{2}\,|\ei|=
\dfrac{1}{|\partial\Q|}\sum_{i=1}^{4}
\dfrac{|\eim|+|\ei|}{2}\,\Vi=:\Vtilde.
\end{equation*}
On taking $\vh=\phii$, recalling \eqref{eq:gradphi} and observing that
\begin{equation*}
P_0\phii=\dfrac{1}{|\partial P|}\dfrac{|\eim|+|\ei|}{2},
\end{equation*}
we have
\begin{equation}
\label{PZ-2}
\PN1\phii=\dfrac{1}{2|P|}(\vx-\Vtilde)\cdot\dRi+
\dfrac{1}{|\partial P|}\dfrac{|\eim|+|\ei|}{2}.
\end{equation}
\end{itemize}
It is clear that if all edges have the same length the two definitions
of $\PZ$ coincide. 
In what follows, we will assume that $\PZ$ is defined 
either by \eqref{PZ-1} or by \eqref{PZ-2}.

\begin{remark}
Observe that we cannot take as $\PZ\vh$ the mean value of $\vh$ on $\Q$,
because we want that $\PN1\vh$ depends only on the boundary value of $\vh$
and not on its actual variation inside $\Q$.
\end{remark}

We now show that the function $\Psih$ defined in \eqref{eq:defPsih}
is in the kernel of $\PN1$;
actually, it turns out that $\ker\PN1=\text{span}\,\{\Psih\}$.
Recall that the function $\Psih$ is defined as the (unique) function
in our local space such that $\Psih(\Vi)=\dfrac{(-1)^i}{2}$.
We observe that $\Psih$ has zero mean value on each edge, and hence
\begin{equation*}
\intQ\nabla\Psih\dx=\intdQ\Psih\n\ds=
\sum_{i=1}^4\left[\int_{\ei}\Psih\ds\right]\ni=0.
\end{equation*}
Furthermore, $P_0(\Psih)=0$
for any of the two choices of $P_0$ given above.
Hence, from~\eqref{explicit-PN1}, we have $\PN1\Psih=0$.


\newcommand{\umz}{\frac{1}{2}}

The function $\Psih$ is linearly independent of the standard first-degree monomials 
$\{1,x,y\}$; hence the four functions $\{1,x,y,\Psih\}$ are a basis
for our local space so that in particular any $\phii$ can be written as a linear 
combination of $1,x,y,\Psih$. To find the coefficients, we exploit 
the fact that the $\phii$ are generalized
barycentric coordinates, that is
\begin{equation*}
\sum_{i=1}^4\phii=1,\quad
\sum_{i=1}^4 x_i\phii=x,\quad
\sum_{i=1}^4 y_i\phii=y
\end{equation*}
plus the equation defining $\Psih$:
\begin{equation*}
\Psih = \dfrac{1}{2}\,(-\phi1+\phi2-\phi3+\phi4).
\end{equation*}
In matrix form, we have
\begin{equation}
\label{transf}
\begin{bmatrix}
1     &    1  &    1  &    1  \\
\x1   &  \x2  &  \x3  &  \x4  \\
\y1   &  \y2  &  \y3  &  \y4  \\
-\umz & \umz  & -\umz & \umz 
\end{bmatrix}
\begin{bmatrix}
\phi1 \\ \phi2 \\ \phi3 \\ \phi4
\end{bmatrix}
=
\begin{bmatrix}
1 \\ x \\ y \\ \Psih
\end{bmatrix}.
\end{equation}
Let us denote by $T_i$ the signed area of the triangle obtained by removing
the vertex $\Vi$ from the quadrilateral $\Q$ and joining vertex $V_{i-1}$ with
vertex $V_{i+1}$ (see Fig. \ref{fig:Ti}).
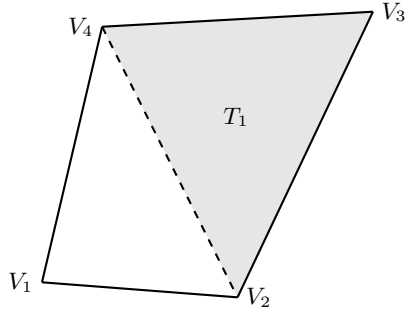
\begin{figure}[ht]
\begin{center}
\begin{tikzpicture}[scale=2]
\coordinate (v1) at (0.1,0.1);
\coordinate (v2) at (-0.5,-1);
\coordinate (v3) at (2,-0.7);
\coordinate (v4) at (2.2,0.3);
\coordinate (v5) at (1,2);
\coordinate (v6) at (-0.8,1.9);
\coordinate (v7) at (-1.2,0.2);
\coordinate (v67) at ($0.5*(v6)+0.5*(v7)$);
\coordinate (v71) at ($0.5*(v7)+0.5*(v1)$);
\coordinate (v16) at ($0.5*(v1)+0.5*(v6)$);
\fill [fill=gray!20] (v1) -- (v5) -- (v6) -- cycle;
\draw [thick]  (v1) -- (v5);
\draw [thick]  (v5) -- (v6);
\draw [thick]  (v6) -- (v7);
\draw [thick]  (v7) -- (v1);
\node [right] at (v1) {$V_2$};
\node [left] at (v6) {$V_4$};
\node [left] at (v7) {$V_1$};
\node [right] at (v5) {$V_3$};
\draw [thick] [dashed] (v6) -- (v1);
\node [right] at ($(v16)+(0.3,0.3)$) {$T_1$};
\tikzAngleOfLine(v6)(v7){\angle};
%
\tikzAngleOfLine(v7)(v1){\angle};
%
%
\end{tikzpicture}
\end{center}
\caption{Signed area $T_1$.}\label{fig:Ti}
\end{figure}
\noindent
Therefore,
\begin{equation*}
\begin{split}
T_1 = \dfrac{1}{2}\,\det\begin{bmatrix}1&1&1\\ x_2&x_3&x_4\\y_2&y_3&y_4\end{bmatrix},\
T_2 = \dfrac{1}{2}\,\det\begin{bmatrix}1&1&1\\ x_1&x_3&x_4\\y_1&y_3&y_4\end{bmatrix},\\
T_3 = \dfrac{1}{2}\,\det\begin{bmatrix}1&1&1\\ x_1&x_2&x_4\\y_1&y_2&y_4\end{bmatrix},\ 
T_4 = \dfrac{1}{2}\,\det\begin{bmatrix}1&1&1\\ x_1&x_2&x_3\\y_1&y_2&y_3\end{bmatrix}.
\end{split}
\end{equation*}
Now, let us consider the coefficient matrix of the linear system
in~\eqref{transf}.
Carrying out the expansion with respect to the last row, it can be
directly verified that its determinant is equal to $2|\Q|$. By directly solving
system \eqref{transf} through Cramer's rule, and expanding the determinant
with respect to the $i$-th column, we obtain
\begin{equation*}
\phii = (a_i+b_i x+c_i y) + (-1)^i\,\dfrac{T_i}{|\Q|}\Psih,\quad i=1,\dots,4,
\end{equation*}
where
\begin{align*}
a_i&=\dfrac{(-1)^iT_i+(x_{i+1}y_{i-1}-x_{i-1}y_{i+1})}{2\Q},\\
b_i&=\dfrac{y_{i+1}-y_{i-1}}{2|\Q|}=\dfrac{(\di)_y}{2|\Q|},\quad
c_i=-\dfrac{x_{i+1}-x_{i-1}}{2|\Q|}=-\dfrac{(\di)_x}{2|\Q|}.
\end{align*}
We conclude that
\begin{equation*}
\phii(\vx)=a_i+\dfrac{\dRi}{2|\Q|}\cdot\vx
+ (-1)^i\,\dfrac{T_i}{|\Q|}\Psih(\vx).
\end{equation*}
On defining the non-dimensional quantities 
\begin{equation*}
T_i':=(-1)^i \dfrac{T_i}{|\Q|},
\end{equation*}
we have
\begin{equation*}
\phii(\vx)=a_i+\dfrac{\dRi}{2|\Q|}\cdot\vx
+ {T_i'}\,\Psih(\vx).
\end{equation*}
Hence we have the following identities:
\begin{equation}
\label{eq:I-PN}
(\PI-\PN1)\phii={T_i'}\,\Psih,
\end{equation}
and
\begin{equation}
(\PI-\PZZ)\nabla\phii=\nabla\phii-\nabla\PN1\phii=
\nabla(\phii-\PN1\phii)={T_i'}\,\nabla\Psih.
\end{equation}
The second term of~\eqref{eq:orthogonal-decomposition} can then be
written as
\begin{equation*}
\intQ\vK(\PI-\PZZ)\nabla\phij\cdot(\PI-\PZZ)\nabla\phii\dx
=
{T_i'\, T_j'} \intQ\vK\nabla\Psih\cdot\nabla\Psih\dx
\end{equation*}
and the matrix $\mB$ of \eqref{eq:identity} is identified by
\begin{equation*}
\mB_{ij}={T_i'\, T_j'}.
\end{equation*}
Setting $\vgamma := 
\begin{bmatrix}
T_1' & T_2' & T_3' &T_4'
\end{bmatrix}'$,
the matrix $\mB$ can be written as
$\mB=\vgamma\vgamma^\text{T}$.
Note that if $\Q$ is a parallelogram, we have
\begin{equation*}
T_i'=(-1)^i\,\dfrac{1}{2}
\end{equation*}
so that $\mB$ is independent on $\Q$:
\begin{equation*}
\mB=\dfrac{1}{4}\,
\begin{bmatrix}
+1 & -1 & +1 & -1 \\
-1 & +1 & -1 & +1 \\
+1 & -1 & +1 & -1 \\
-1 & +1 & -1 & +1 \\
\end{bmatrix}.
\end{equation*}
We end up with the formula:
\begin{equation*}
\intQ\vK\nabla\phij\cdot\nabla\phii\dx =
\dfrac{1}{4|\Q|}\,\vK\dRj\cdot\dRi+
\left[\intQ\vK\nabla\Psih\cdot\nabla\Psih \dx\right]
\vgamma\vgamma^\text{T}
= \mA+\tau\mB.
\end{equation*}
Clearly, the only term that depends on the variation
of the generalized barycentric coordinates inside $\Q$ is the
coefficient 
\begin{equation*}
\tau=\intQ\vK\nabla\Psih\cdot\nabla\Psih \dx,
\end{equation*}
which is the \textit{energy of the hourglass mode $\Psih$}.
\end{proof}

\section{Value of $\tau$ for some GBCs} 

For a general quadrilateral usually
the value of $\tau$ does not
have an expression in closed form.
In this Section, we report the value of $\tau$
for isoparametric finite element shape functions and 
Wachspress
coordinates over rectangles and parallelograms.

\subsection{Rectangles}
For a rectangle, isoparametric FEM, 
harmonic generalized barycentric coordinates~\cite{Joshi:2007:HCF}
and Wachspress shape functions
coincide. If $\Q=[0,a]\times[0,b]$, 
the value of $\tau$ 
for a general $\vK$ is given by
\begin{equation}
\label{eq:taurec}
\tau = \dfrac{b^2\kappa_{11}+a^2\kappa_{22}}{3ab}.
\end{equation}

\subsection{Parallelograms}

For a parallelogram, isoparametric FEM and Wachspress GBCs coincide. In this case, the value of $\tau$ is given by:
\begin{itemize}
\item[$\bullet$]
When $\vK=\kappa\vI$, and 
$\Q$ is a parallelogram of sides $a$ and $b$
with angle $\theta$:
\begin{equation*}
\tau = \kappa\,\dfrac{a^2+b^2}{3ab\sin\theta}.
\end{equation*}
\item[$\bullet$]
For general $\vK$, and if $\Q$ is a 
parallelogram of sides $a$ and $b$
with angle $\theta$, with side $a$ parallel to the $x$-axis:
\begin{equation*}
\tau=
\dfrac{a^2\kappa_{22}+b^2\kappa_{11}}{3ab\sin\theta}
+\frac{b\,\left(
{(\kappa_{22}-\kappa_{11}){\,\cos^2\theta}}
-{2\,\kappa_{12}\,\cos\theta\,\sin\theta}\right)}{3a\sin\theta}.
\end{equation*}
\end{itemize}

\section{Connection with Virtual Element Method}
The virtual element method is a fairly recent methodology
that in particular extends classical 
finite elements to
polygonal and polyhedral meshes, see~\cite{ACTA-VEM} and 
the references therein. The keys ideas in VEM are:
\begin{itemize}
\item[$\bullet$]
the local space is \say{virtual} in the sense that functions are known
only through their degrees of freedom;
\item[$\bullet$]
the element stiffness matrix is split into a \say{consistency} term that takes
care of the accuracy plus a \say{stability} term that ensures
stability without violating consistency: 
\begin{equation*}
\mKVEM = \mKc + \mKs.
\end{equation*}
\end{itemize}

\subsection{Linear virtual element on a polygon}

In the case of linear virtual elements for the
diffusion equation, 
the \say{basic} local space coincides with harmonic GBCs,
whereas 
the \say{enhanced} version~\cite{projectors} is still a
GBC, but the local functions 
are no longer harmonic.
The consistency and stability matrices in the VEM
are built upon 
the construction of the $\PN1$ projection, which can be extended
to a general polygon $\P$ with $\NV$ vertices by following
the above construction, 
and is still given by~\eqref{explicit-PN1} 
(see \cite{volley,projectors,hitchhikers} for the details).

In the case $\vK$ is a constant matrix,
for the diffusion problem the local 
VEM consistency matrix $\mKc$ concides with the matrix $\mA$
of the decomposition~\eqref{eq:identity}:
\begin{equation*}
(\mKc)_{ij} =
\mA_{ij} = 
\intP\vK\PZZ\nabla\phij\cdot\PZZ\nabla\phij\dx =
\intP\vK\nabla\PN1\phij\cdot\nabla\PN1\phij\dx =
\dfrac{1}{4|\P|}\,\vK\dRj\cdot\dRi.
\end{equation*}

The VEM stability matrix $\mKs$ for a polygon $\P$
is built in the following way.
Let $\calS$ be a symmetric bilinear form defined on the
local space that \say{scales} on the kernel of $\PN1$
like the bilinear form associated with the differential equation, 
i.e., there exist two constants 
$\alpha_*$ and $\alpha^*$ 
independent
of the element $\P$
such that
\begin{equation*}
\alpha_* \intP\vK\nabla\vh\cdot\nabla\vh\dx 
\leq \calS(\vh,\vh) \leq
\alpha^* \intP\vK\nabla\vh\cdot\nabla\vh\dx 
\quad\text{for all }\vh\in\ker\PN1.
\end{equation*}
Then if we define
\begin{equation}
(\mKs)_{ij} := \calS\left((\PI-\PN1)\phij,(\PI-\PN1)\phii\right) ,
\end{equation}
we have a convergent method (see Theorem 3.1 and 4.1 
of~\cite{volley}). 

In order to construct a computable $\calS$
satisfying the hypotheses above,
we proceed in the following way.
Define $\dof_i(\vh)$ as the $i$-th degree of freedom of $\vh$
in the linear case, that is
\begin{equation*}
\dof_i(\vh):=\vh(\Vi),\quad i=1,\dots,\NV 
\end{equation*}
and then set
\begin{equation*}
\calS(\uh,\vh):=\tauVEM\sum_{i=1}^\NV\dof_i(\uh)\,\dof_i(\vh)
\quad\text{(dofi-dofi stabilization)}
\end{equation*}
where $\tauVEM$ is a parameter to be fixed
(see Section 4.2 of~\cite{variable-coefficients}). 
Under reasonable assumptions on the mesh sequence
(quadrilaterals are not degenerate) 
to have convergence when the mesh size goes to zero,
we can take any non-zero constant for $\tauVEM$,
provided that all $\tauVEM$'s for all polygons and for
all meshes are uniformly bounded from below and above.
In other words, we can say that \textit{$\tauVEM$ must 
scale like 1}. If we are in three dimensions we 
require that \textit{$\tauVEM$ scales like $h$}.

Hence, the final expression for the local stability
$\mKs$ is the following:
\begin{equation*}
(\mKs)_{ij} = \tauVEM\,
\sum_{k=1}^\NV
\dof_k[(\PI-\PN1)\phij]\,
\dof_k[(\PI-\PN1)\phii].
\end{equation*}
In the next section, we examine a
practical choice for $\tauVEM$.

\subsection{Revisiting quadrilaterals}
From now on we consider a polygon to be a 
quadrilateral $\Q$.
In this case, we have
\begin{equation*}
\sum_{k=1}^4
\dof_k[(\PI-\PN1)\phij]\,
\dof_k[(\PI-\PN1)\phii]
=
\mB_{ij},
\end{equation*}
where $\mB$ is the matrix appearing in the 
decomposition~\eqref{eq:identity}.
In fact, since 
$(\PI-\PN1)\phii = T_i'\,\Psih$
(see~\eqref{eq:I-PN})
and
$\Psih=\dfrac12\sum_{\ell=1}^{4}(-1)^\ell\phik$,
we have
\begin{equation*}
\dof_k[(\PI-\PN1)\phii]=
T_i'\sum_{\ell=1}^4\dfrac{(-1)^\ell}{2}\dof_k(\phil)=
T_i'\sum_{\ell=1}^4\dfrac{(-1)^\ell}{2}\delta_{kl}=
T_i'\dfrac{(-1)^k}{2}
\end{equation*}
so that
\begin{equation*}
\sum_{k=1}^4\dof_k[(\PI-\PN1)\phij]\,\dof_k[(\PI-\PN1)\phii]=
T_j'\,T_i'\sum_{k=1}^{4}\left[\dfrac{(-1)^k}{2}\right]^2=
T_j'\,T_i'=\mB_{ij}.
\end{equation*}
To summarize, the element VEM stiffness matrix
for a quadrilateral can be written as
\begin{equation}
\label{eq:identity-VEM}
\mKVEM = \mA + \tauVEM\mB.
\end{equation}

\section{How to choose $\tauVEM$?} 

The decompositions in~\eqref{eq:identity} and
in~\eqref{eq:identity-VEM} for GBCs and VEM, respectively are: 
\begin{equation*}
\mK = \mA + \tau\mB
\quad\text{and}\quad
\mKVEM = \mA + \tauVEM\mB,
\end{equation*}
which are formally equal, but very different in practice. 
\begin{itemize}
\item[$\bullet$]
In the case of GBCs,
the value of the parameter $\tau$ is well defined as the energy of a
particular function (the hourglass mode $\Psih$) of the local space:
\[
\tau=\intQ\vK\nabla\Psih\cdot\nabla\Psih\dx.
\]
The value of $\tau$ can be computed by quadrature;
actually, computing $\tau$ by a quadrature formula and 
using the 
decomposition~\eqref{eq:identity} is equivalent 
to approximating directly $\mK$ 
(as defined in~\eqref{eq:stiffness-matrix}) 
with the same quadrature formula.
\item[$\bullet$]
For VEM, the parameter $\tauVEM$ is left unspecified, and we only require
that convergence hypotheses are satisfied. In this
very particular case (quadrilaterals, linear VEM, $\vK$ constant) 
the value of $\tauVEM$ could be in principle 
identified with the energy of the corresponding 
houglass mode of the VEM space, but the extremely wide range of applicability of VEM
(general polygons and polyhedra, polynomials of any order, 
elasticity, Navier--Stokes, magnetostatics problems) prevents
in most instances a constructive approach for the computation of $\tauVEM$.
\end{itemize}
Here we will use the interpretation of the \say{correct} $\tauVEM$ 
as the energy of the hourglass mode $\Psih$ to draw some general
conclusions about the design of $\tauVEM$ and the consequences 
of having employed an \say{incorrect} $\tauVEM$.

First of all, it seems natural to include in $\tauVEM$ some information from $\vK$. This is not needed for convergence, but for a given
mesh if $\vK$ is large 
we could have a 
marked difference in the solution. 
We then set:
\begin{equation}
\label{eq:tauVEM}
\tauVEM = \dfrac{\trace\vK}{2}.
\end{equation}
Hence, regardless of the shape of the quadrilateral $\Q$
and of the local VEM space, in the VEM the 
energy of the hourglass mode $\Psih$ is always set to 
$\trace\vK / 2$.

As observed in many papers on VEM, the sensitivity with respect to
the value of $\tauVEM$ is usually mild: the value 
in~\eqref{eq:tauVEM}
works well for a wide range of polygonal shapes.
%
%
We now provide explanation and clarification for this observation.

\subsection{The worst case for VEM}
The worst scenario for VEM on a given mesh is the following:
\begin{itemize}
\item[$\bullet$]
the value of $\tauVEM$ is a bad approximation of
the energy of $\Psih$;
\item[$\bullet$] 
the exact solution of the PDE \say{contains} the hourglass mode $\Psih$.
\end{itemize}

Note, however, that the hourglass mode $\Psih$ depends on the mesh;
hence upon refinement we are led to a \say{better} solution.

\subsection{Effect of $\tauVEM$ in a Laplace problem}
Consider the following Laplace problem with inhomogeneous
Dirichlet boundary conditions:
\begin{equation}
\label{eq:uexact}
\left\{
\begin{aligned}
-\Delta u &= 0\quad\text{ in } \Omega = (0,1)^2, \\
        u &= g\quad\text{ on } \partial\Omega ,
\end{aligned}
\right.
\end{equation}
where $g$ is a continuous, piecewise linear function that
oscillates 20 times on each edge from $-1/4$ to $+1/4$. 
The exact solution $u$ decays very quickly to zero 
inside the domain; see the reference solution shown in Fig.~\ref{fig:ue}.
\begin{figure}[ht]
    \centering
    {\includegraphics[trim = 1.6cm 1cm 1.6cm 1.8cm, clip, scale=0.6]{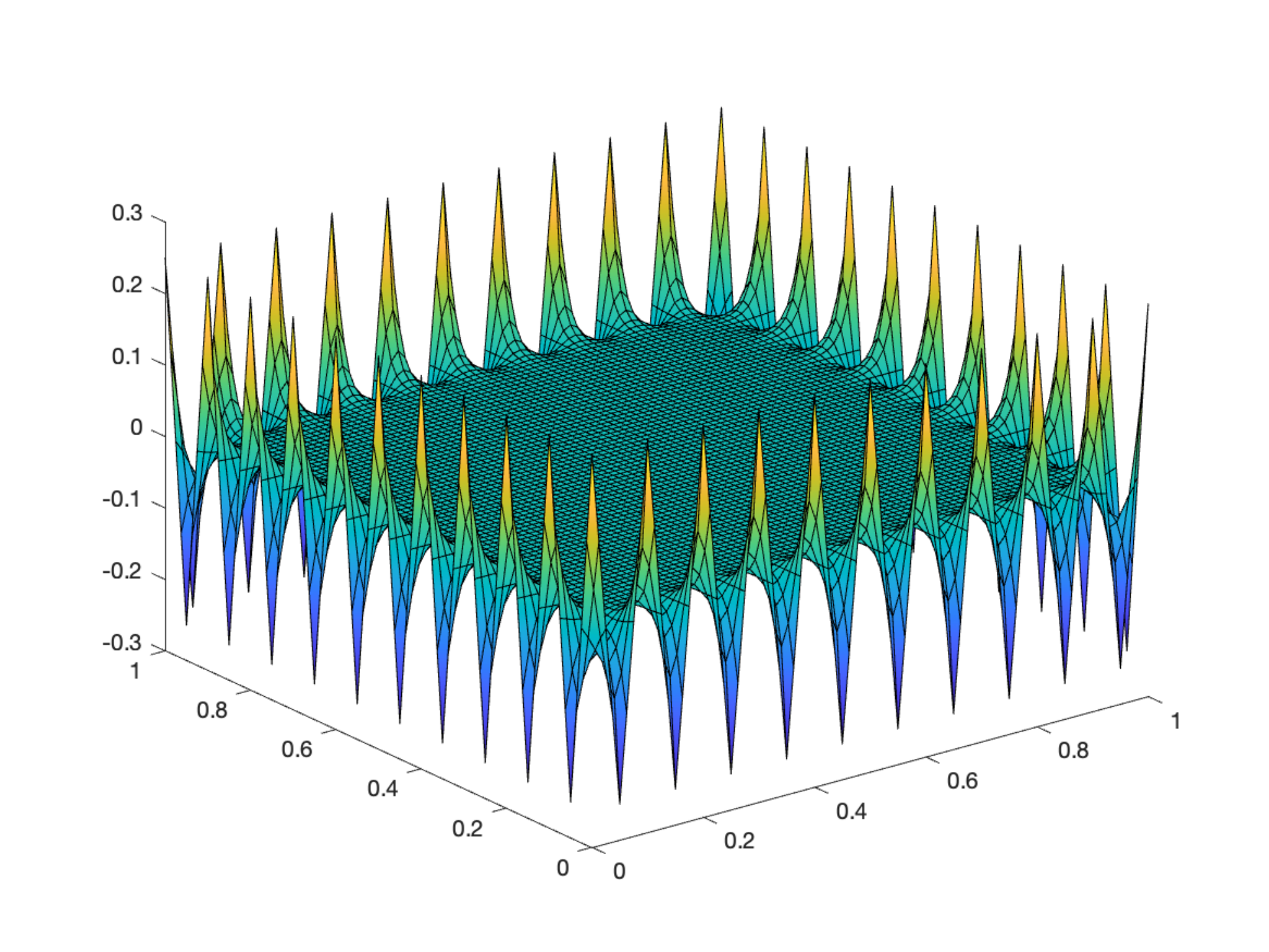}}
    \caption{Reference finite element solution to
             the problem posed in~\eqref{eq:uexact}.}
    \label{fig:ue}
\end{figure}

Now we want to use a mesh such that the hourglass mode $\Psih$ has a 
strong component in the exact solution $u$. We divide 
$\Omega$
into $20\times20$ uniform squares, in such a way that the boundary condition
$g$ oscillates precisely as $\Psih$. 
Hence, if the numerical scheme does not
adopt the correct value of the energy of $\Psih$, it will propagate
$g$ inside the domain and lead to an incorrect solution.
This is similar to hourglass modes in a FEM mesh
that can become
communicable and wreck the solution~\cite{TB:book}.
In the first experiment we consider
isoparametric FEM
(see Fig.~\ref{fig:20x20-tau=2over3}) and
VEM with $\tauVEM=1$ (see Fig.~\ref{fig:20x20-tau=1}).
Given that the mesh is coarse, the numerical solutions 
are adequate; however, note that the two solutions are
distinct.
\begin{remark}
From \eqref{eq:tauVEM},
$\tauVEM=\trace\vK/2$;
here $\vK=\vI$, so $\tauVEM=1$. 
On a square,
VEM with $\tauVEM=2/3$
is identical to 
isoparametric FEM
(see \eqref{eq:taurec}).
\end{remark}
\begin{figure}[!htb]
   \begin{subfigure}{0.48\textwidth}
     \centering
   {\includegraphics[trim = 1.6cm 1cm 1.6cm 1.8cm, clip, scale=0.4]{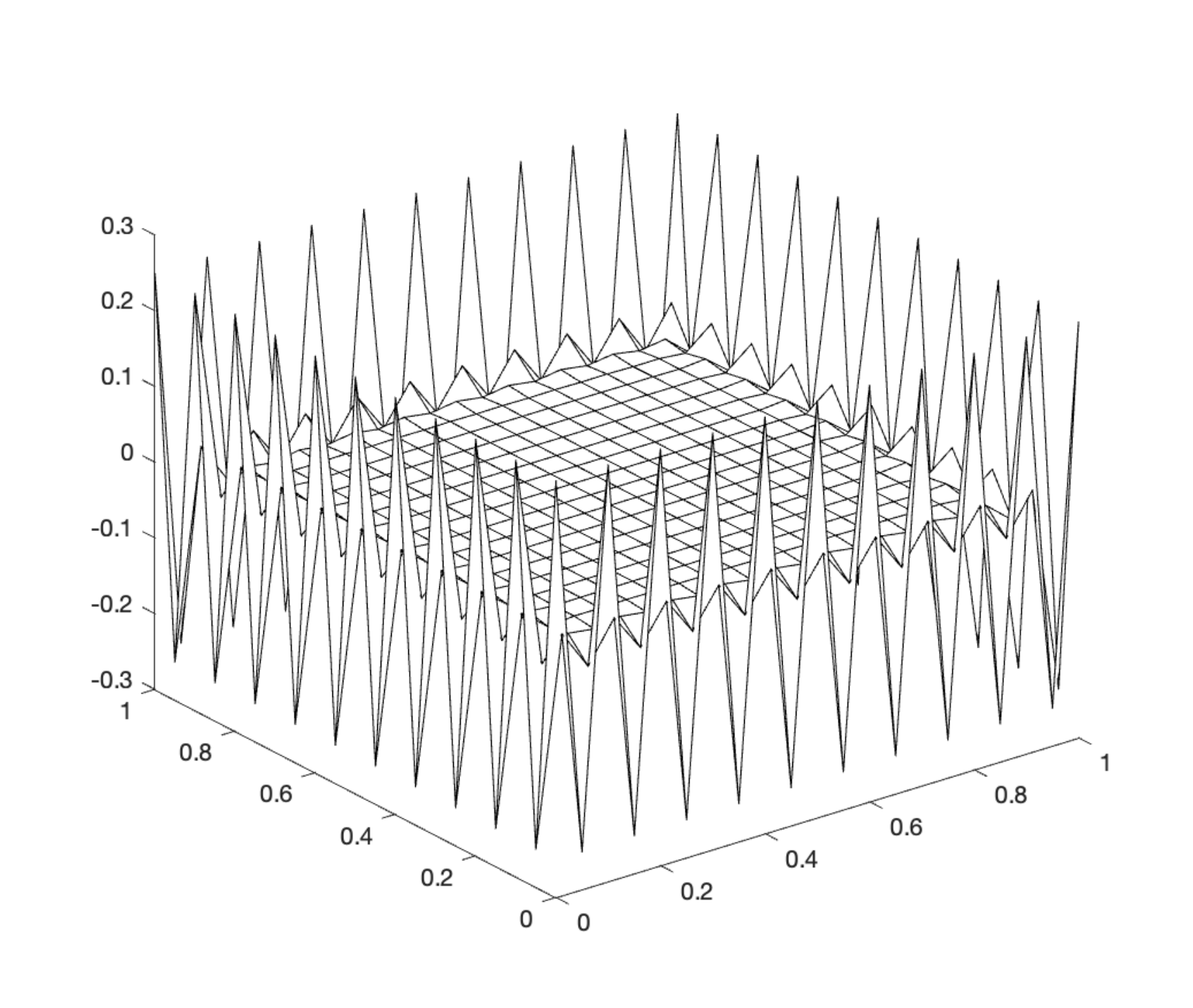}}
    \subcaption{Isoparametric FEM}
    \label{fig:20x20-tau=2over3}
   \end{subfigure}\hfill
   \begin{subfigure}{0.48\textwidth}
     \centering
    {\includegraphics[trim = 1.6cm 1cm 1.6cm 1.8cm, clip, scale=0.4]{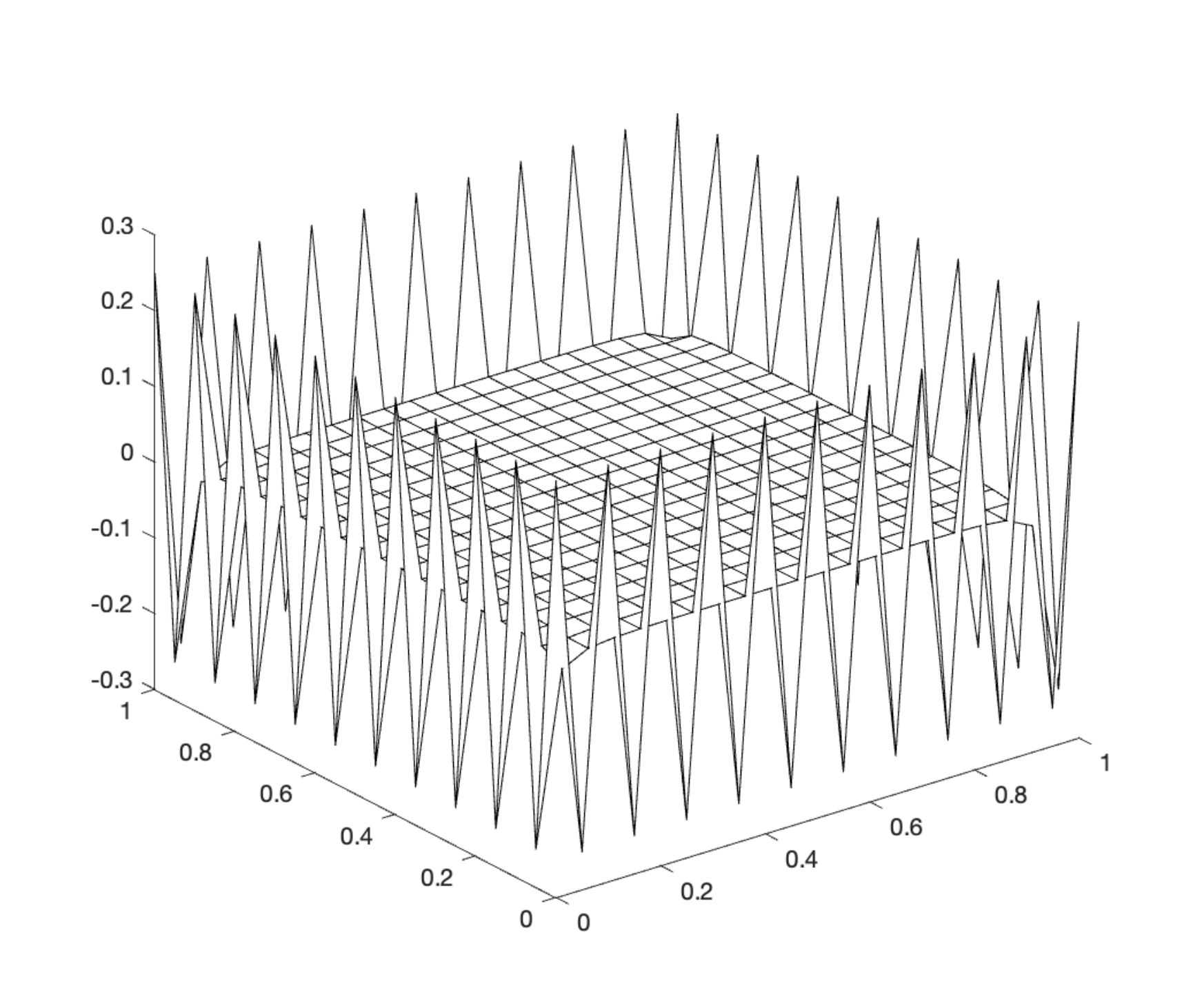}}
    \subcaption{VEM ($\tauVEM = 1$)}
    \label{fig:20x20-tau=1}
   \end{subfigure}
   \caption{Numerical solution for isoparametric FEM and 
            VEM on a $20 \times 20$ mesh.}
            \label{fig:20x20-tau-FEM-VEM}
\end{figure}

Now we take VEM with $\tauVEM=0.1$, and we can see that 
the virtual element solution worsens
(see Fig.~\ref{fig:20x20-tau=0.1}).
If we further decrease the value of $\tauVEM$ to $0.01$, 
then the boundary data $g$ is almost free to move in the domain since dissipation is very small
(Fig.~\ref{fig:20x20-tau=0.01}).
The numerical solution is also unstable
for $\tauVEM=10$ and 
$\tauVEM=100$, as is observed in
Figs.~\ref{fig:20x20-tau=10} and~\ref{fig:20x20-tau=100}.
\begin{figure}[!htb]
   \begin{subfigure}{0.48\textwidth}
     \centering
    {\includegraphics[trim = 1.6cm 1cm 1.6cm 1.8cm, clip, scale=0.4]{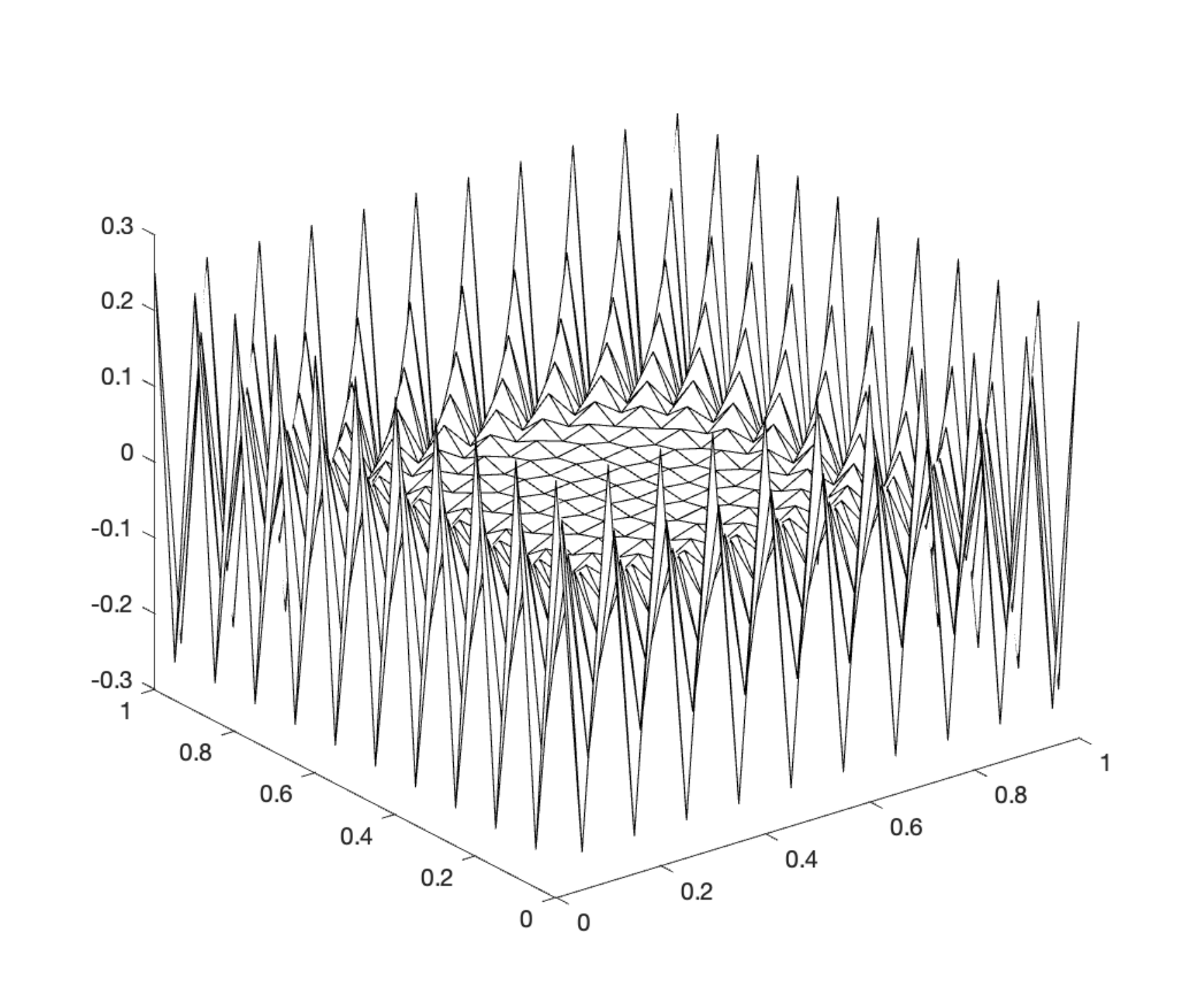}}
    \subcaption{$\tauVEM=0.1$}
    \label{fig:20x20-tau=0.1}
   \end{subfigure}\hfill
   \begin{subfigure}{0.48\textwidth}
     \centering
    {\includegraphics[trim = 1.6cm 1cm 1.6cm 1.8cm, clip, scale=0.4]{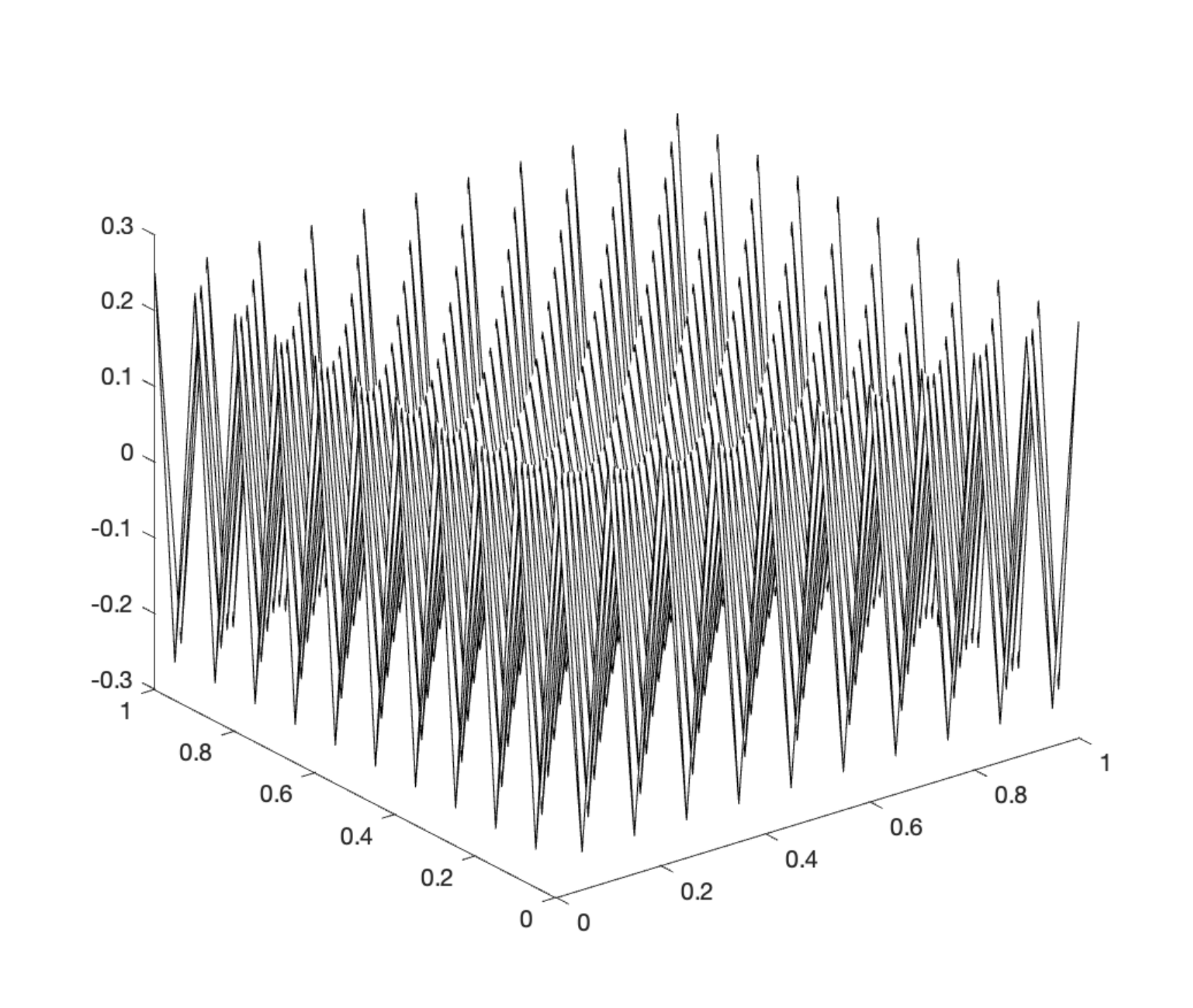}}
    \subcaption{$\tauVEM=0.01$}
    \label{fig:20x20-tau=0.01}
   \end{subfigure}
   \begin{subfigure}{0.48\textwidth}
     \centering
    {\includegraphics[trim = 1.6cm 1cm 1.6cm 1.8cm, clip, scale=0.4]{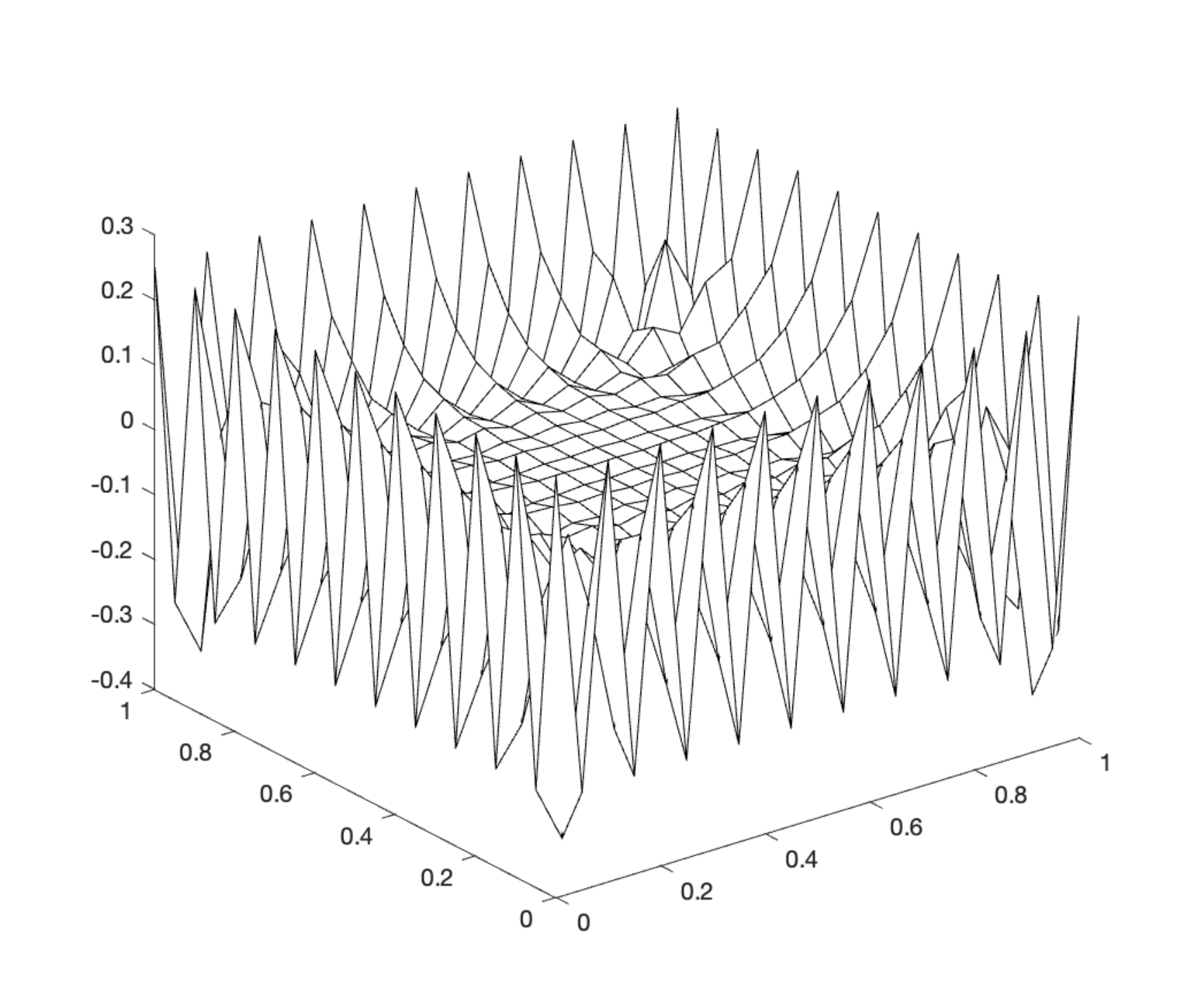}}
    \subcaption{$\tauVEM=10$}
    \label{fig:20x20-tau=10}
   \end{subfigure}\hfill
   \begin{subfigure}{0.48\textwidth}
     \centering
    {\includegraphics[trim = 1.6cm 1cm 1.6cm 1.8cm, clip, scale=0.4]{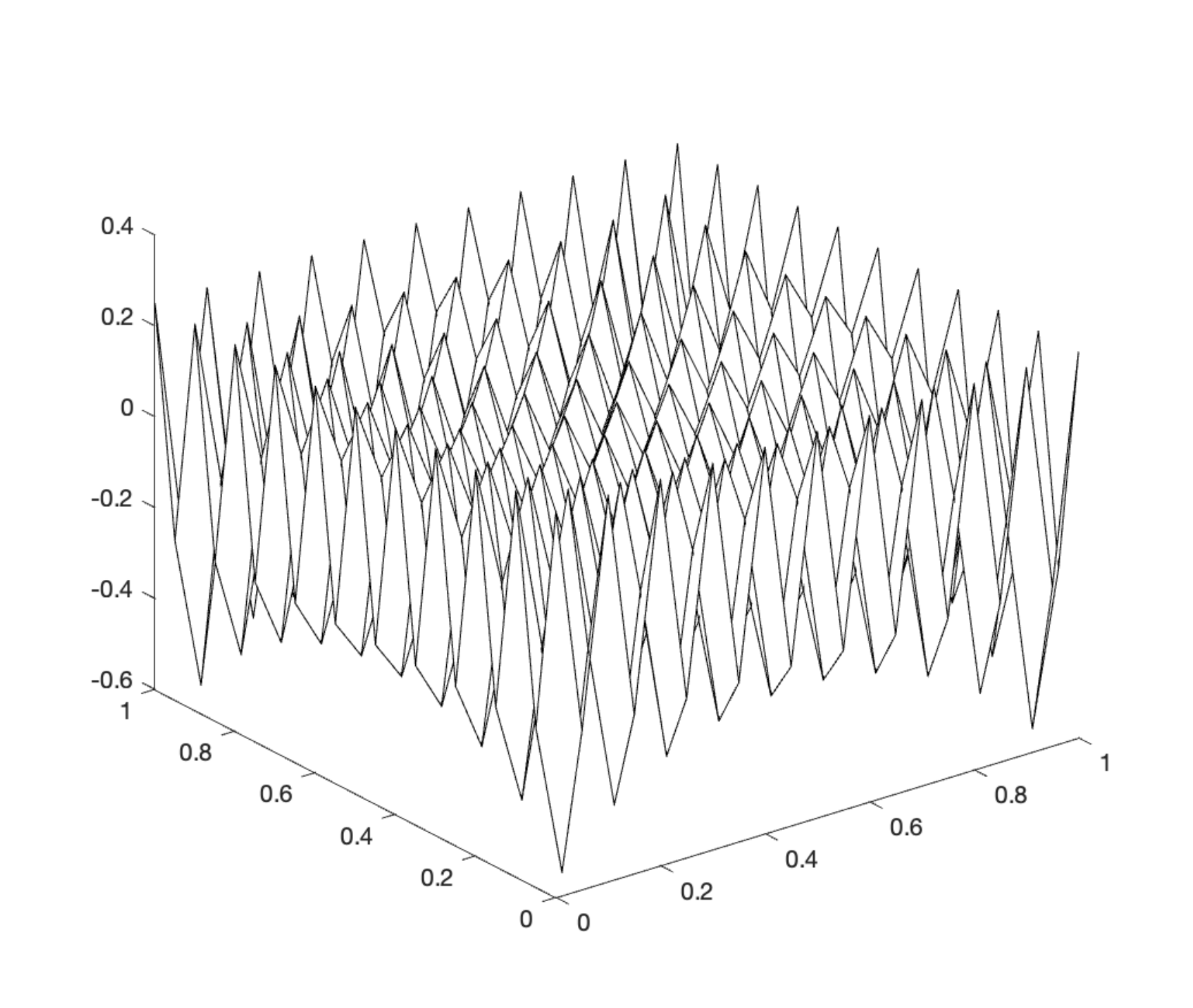}}
    \subcaption{$\tauVEM=100$}
    \label{fig:20x20-tau=100}
   \end{subfigure}
    \caption{Numerical solution using different $\tauVEM$ for
            the VEM on a $20 \times 20$ mesh.}
            \label{fig:20x20-tau-VEM}
\end{figure}

However, if we refine and consider a $40\times40$ mesh of
uniform squares,
the two values $0.1$ and $10$ for 
$\tauVEM$ become acceptable, as is seen in 
Figs.~\ref{fig:40x40-tau=0.1} 
and~\ref{fig:40x40-tau=10}.
The reason for this observation
is that now the hourglass mode $\Psih$ has a smaller
component in the exact solution $u$, and even if its energy
of $\Psih$ is not precise, its impact on the numerical solution is not severe. As the mesh is further refined, the virtual
element solution becomes more and more 
insensitive to $\tauVEM$, as 
shown in Figs.~\ref{fig:80x80-tau=0.01}--\ref{fig:80x80-tau=100}.
\begin{figure}[!htb]
   \begin{subfigure}{0.48\textwidth}
     \centering
    {\includegraphics[trim = 1.6cm 1cm 1.6cm 1.8cm, clip, scale=0.4]{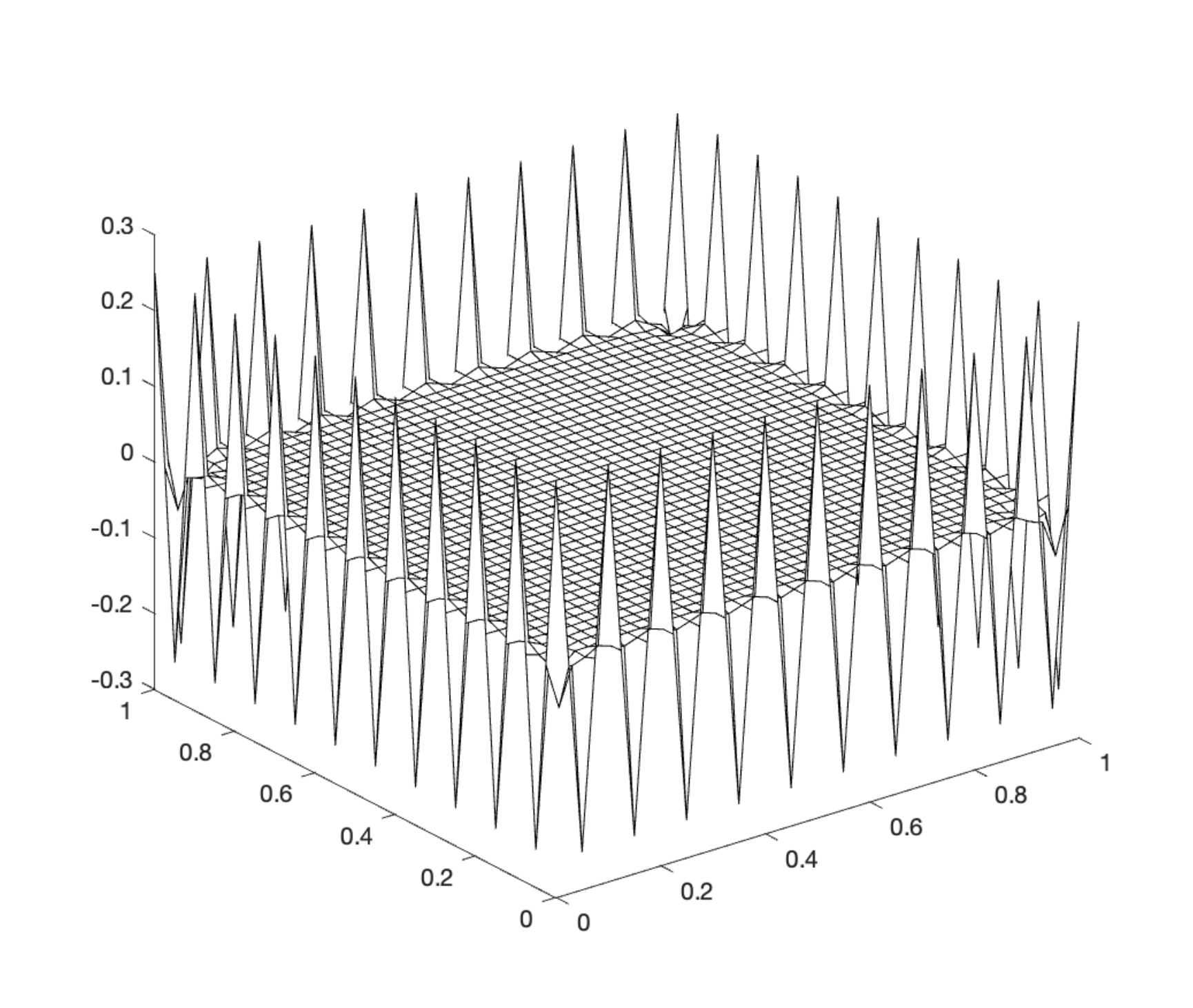}}
    \caption{$\tauVEM=0.1$}
    \label{fig:40x40-tau=0.1}
   \end{subfigure}\hfill
   \begin{subfigure}{0.48\textwidth}
     \centering
    {\includegraphics[trim = 1.6cm 1cm 1.6cm 1.8cm, clip, scale=0.4]{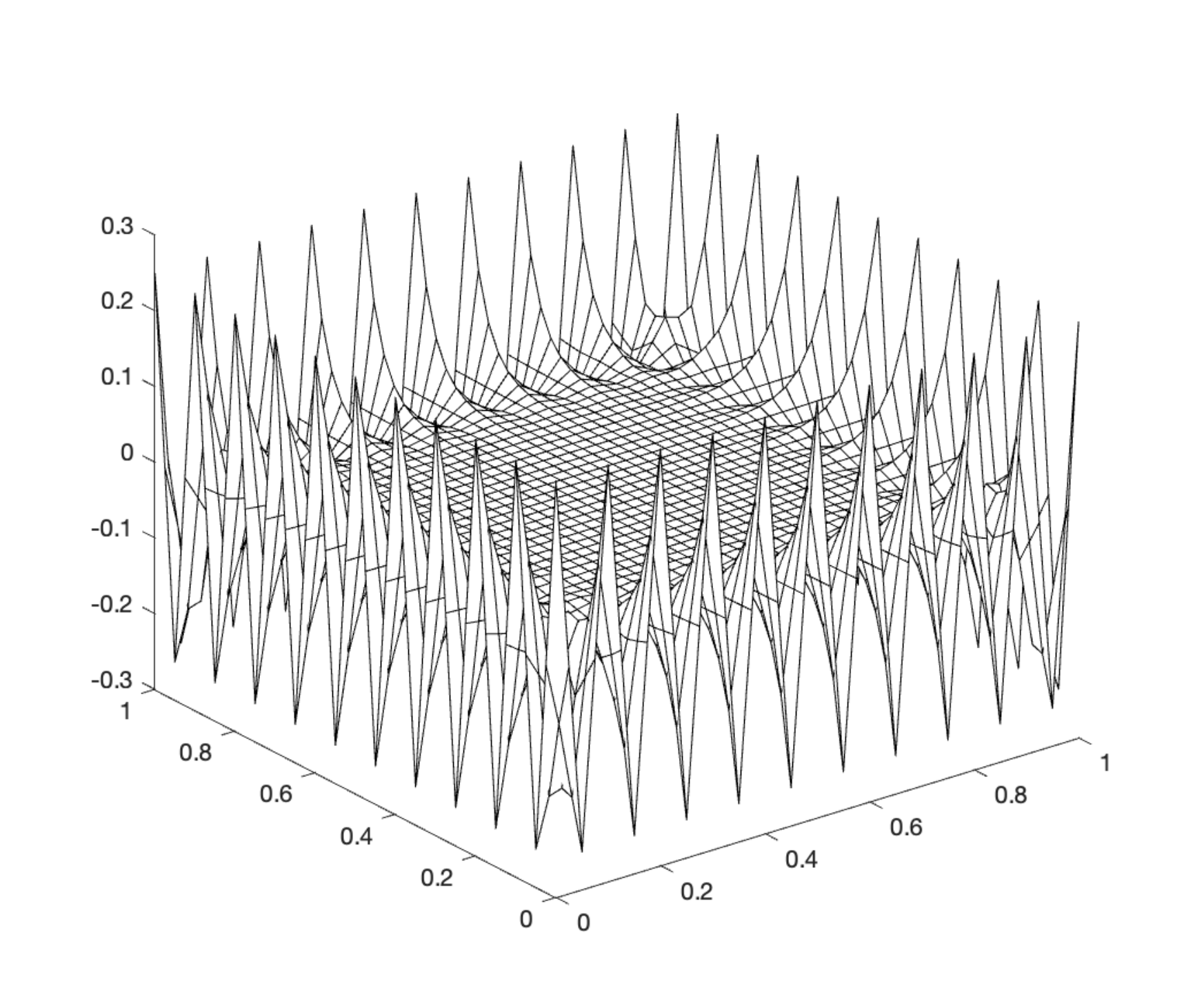}}
    \caption{$\tauVEM=10$}
    \label{fig:40x40-tau=10}
   \end{subfigure}
   \caption{Numerical solution using different $\tauVEM$ for
            the VEM on a $40 \times 40$ mesh.}
            \label{fig:40x40-tau-VEM}
\end{figure}
\begin{figure}[!htb]
   \begin{subfigure}{0.48\textwidth}
     \centering
    {\includegraphics[trim = 1.6cm 1cm 1.6cm 1.8cm, clip, scale=0.4]{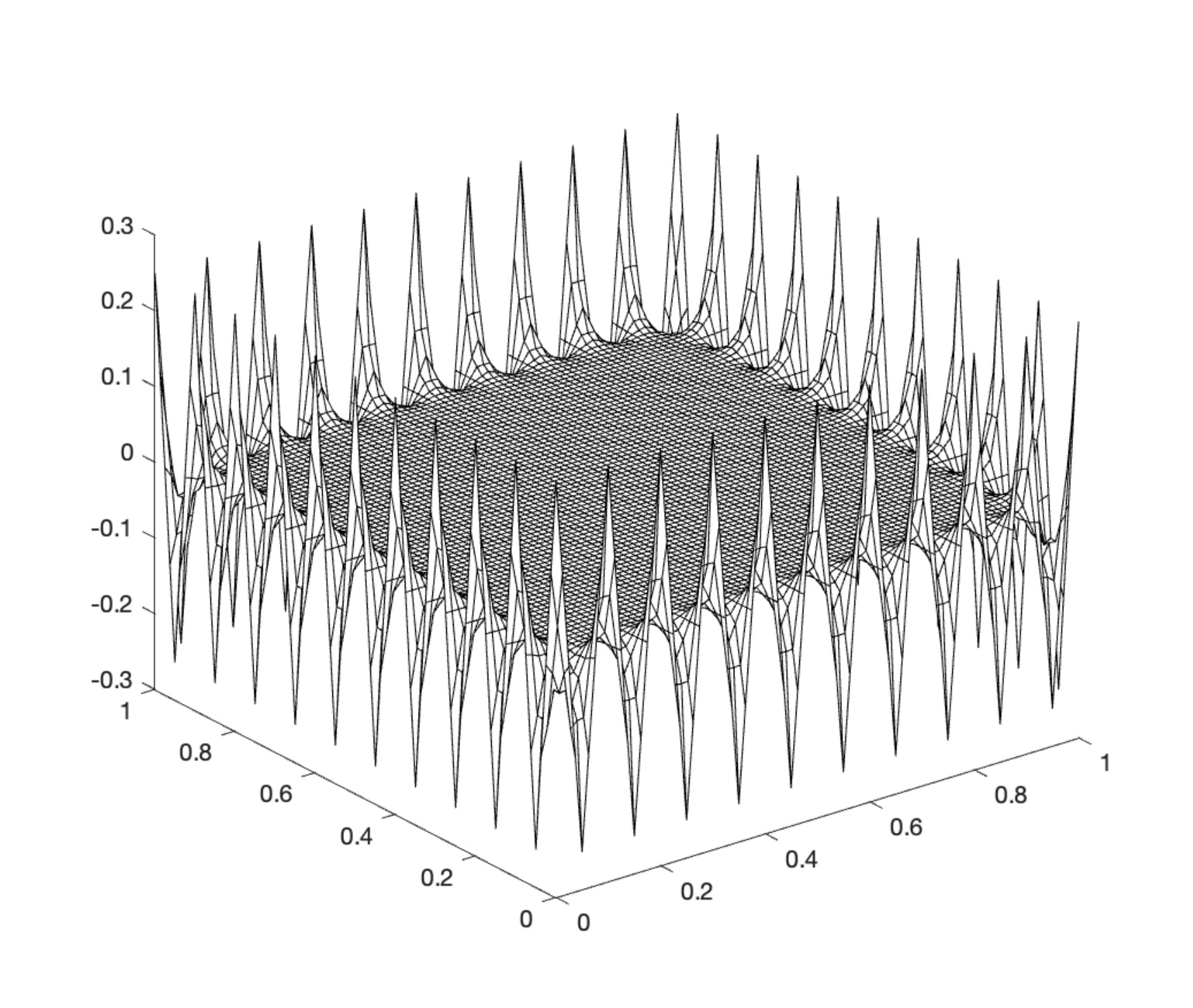}}
    \caption{$\tauVEM=0.01$}
    \label{fig:80x80-tau=0.01}
   \end{subfigure}\hfill
   \begin{subfigure}{0.48\textwidth}
     \centering
    {\includegraphics[trim = 1.6cm 1cm 1.6cm 1.8cm, clip, scale=0.4]{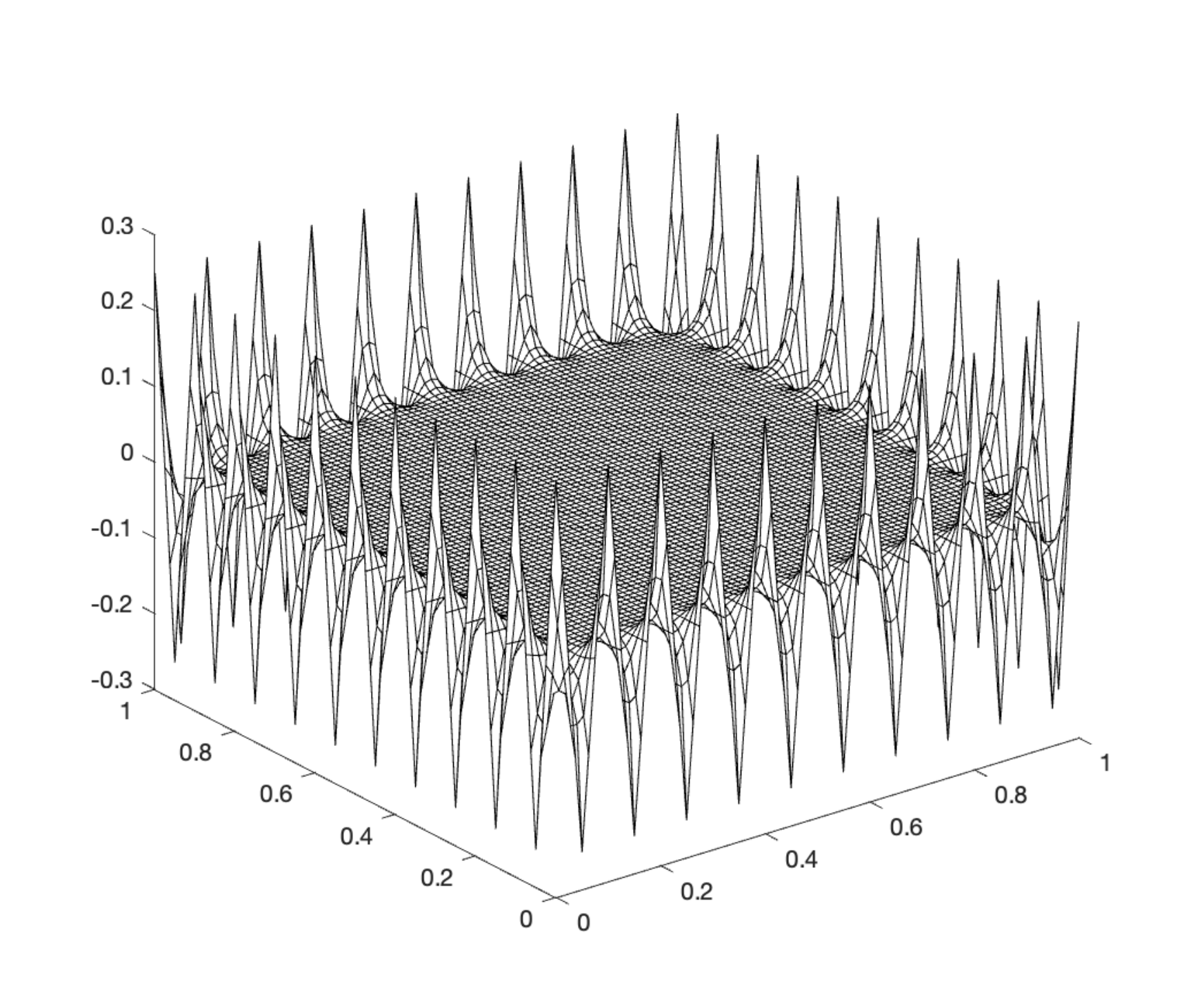}}
    \caption{$\tauVEM=0.1$}
    \label{fig:80x80-tau=0.1}
   \end{subfigure}
   \begin{subfigure}{0.48\textwidth}
     \centering
    {\includegraphics[trim = 1.6cm 1cm 1.6cm 1.8cm, clip, scale=0.4]{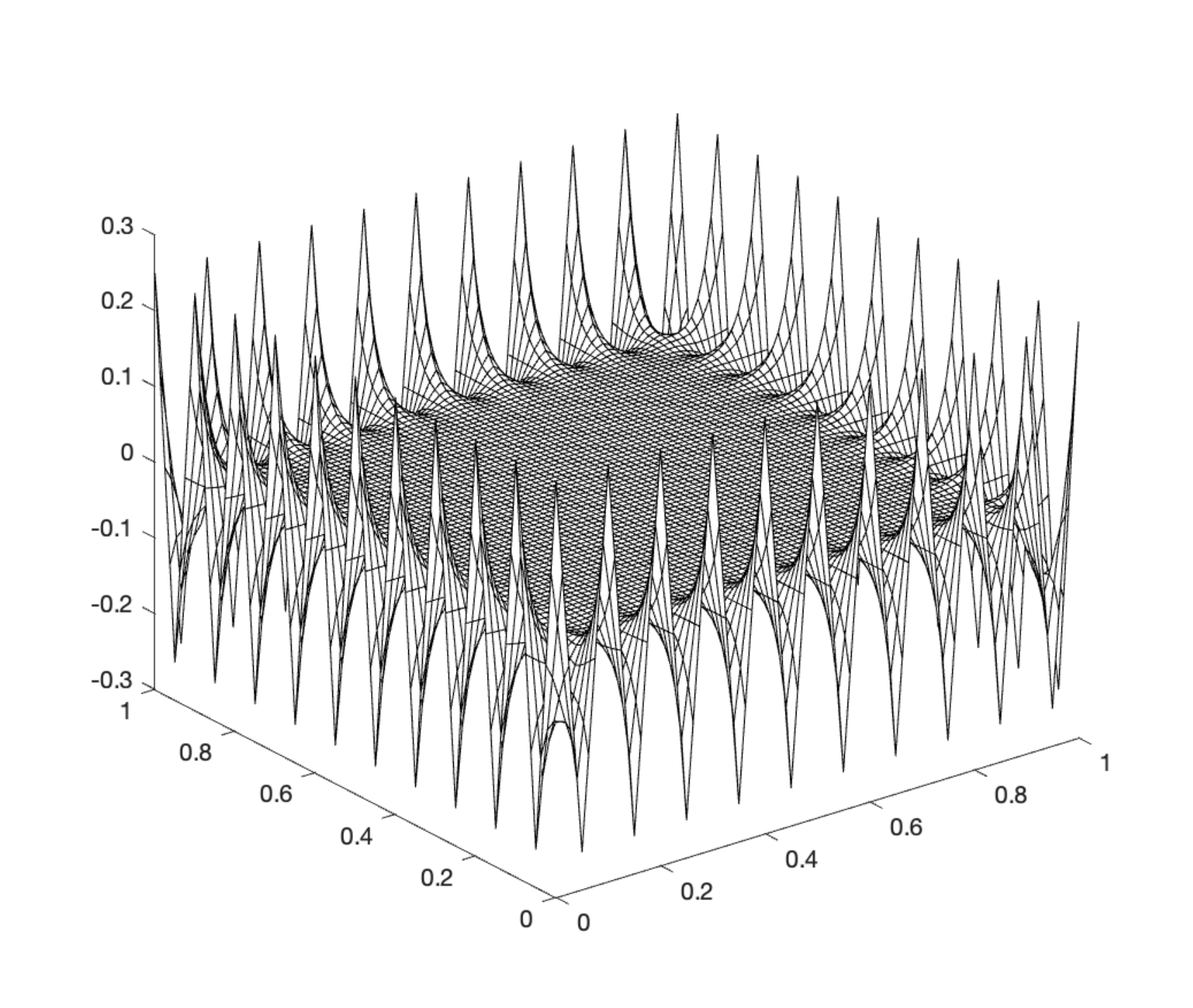}}
    \caption{$\tauVEM=10$}
    \label{fig:80x80-tau=10}
   \end{subfigure}\hfill
   \begin{subfigure}{0.48\textwidth}
     \centering
    {\includegraphics[trim = 1.6cm 1cm 1.6cm 1.8cm, clip, scale=0.4]{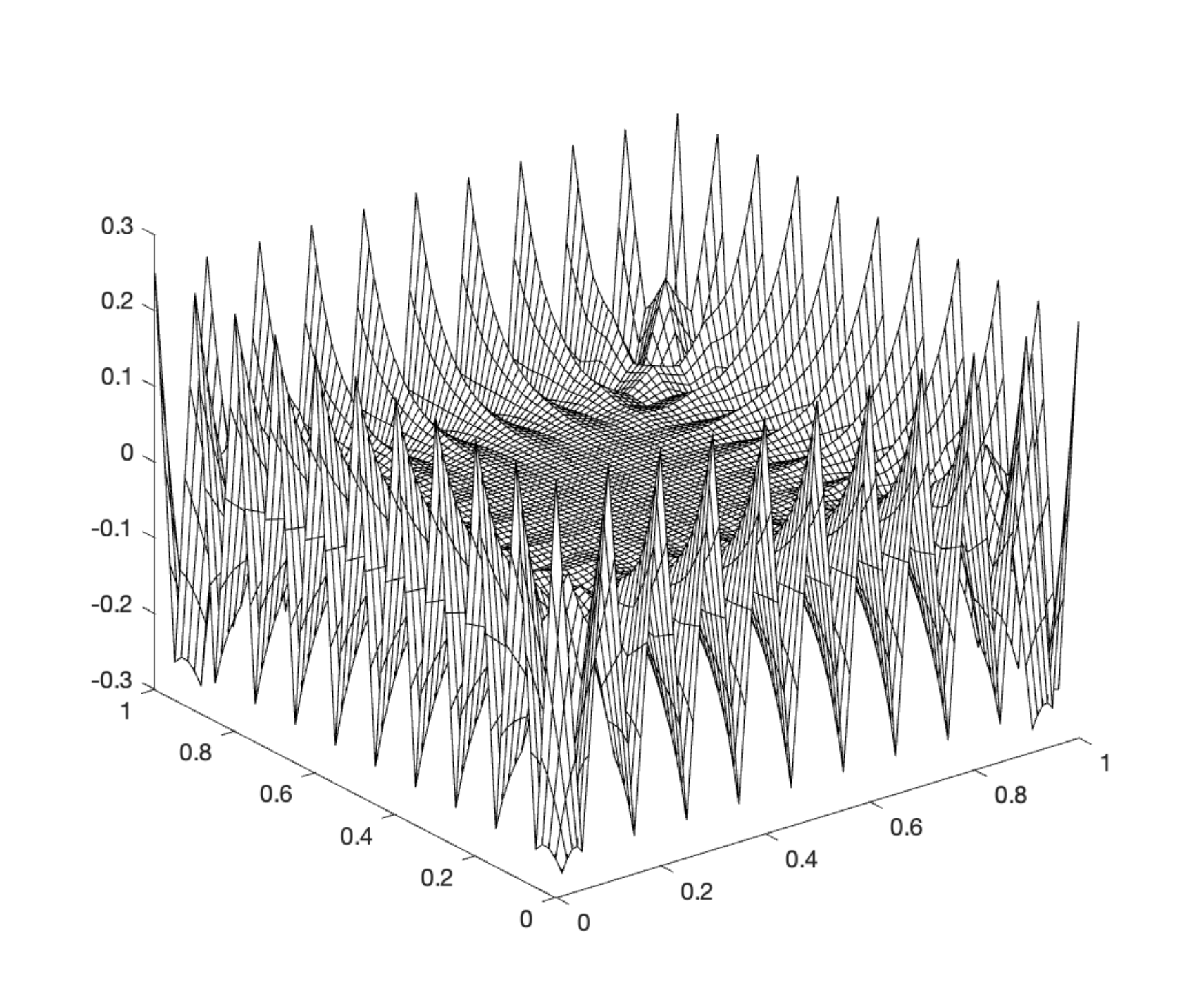}}
    \caption{$\tauVEM=100$}
    \label{fig:80x80-tau=100}
   \end{subfigure}
    \caption{Numerical solution using different $\tauVEM$ for
            the VEM on a $80 \times 80$ mesh.}
            \label{fig:80x80-tau-VEM}
\end{figure}

\subsection{Numerical comparison between isoparametric FEM and VEM}

Consider the following diffusion problem in the unit square, $\Omega=(0,1)^2$:
\begin{equation}\label{eq:problem2}
\left\{
\begin{aligned}
-\text{div}\,(\vK\nabla u) &= f\quad\text{ in }\Omega \\
                 u &= g\quad\text{ on }\partial\Omega
\end{aligned}
\right.
\end{equation}
where $\vK(x,y)=\begin{bmatrix} 1+y^2 & -xy \\ -xy & 1+x^2\end{bmatrix}$ and $f$ and $g$ are chosen
in such a way that
\begin{equation*}
    u(x,y) = x^3 - xy^2 + x^2y -xy + x^2 -x + y -1 + \sin 
    (5x) \sin (7y) + \log(1+x^2+y^4)
\end{equation*}
is the exact solution. In each element, $\vK$ is approximated
with its value at the barycenter of the element.
We compare isoparametric FEM and VEM on two meshes of irregular
quadrilaterals that are shown in Fig.~\ref{fig:meshes}.
The numerical results on the two meshes 
are presented in Figs.~\ref{fig:uh1ISOVEM} 
and~\ref{fig:uh2ISOVEM}. We observe that the solutions
of FEM and VEM are  indistinguishable with
proximal errors in the $L_\infty$ norm.

\begin{figure}[!htb]
   \begin{subfigure}{0.48\textwidth}
     \centering
    {\includegraphics[trim = 0cm 0cm 0cm 0cm, clip, scale=0.4]{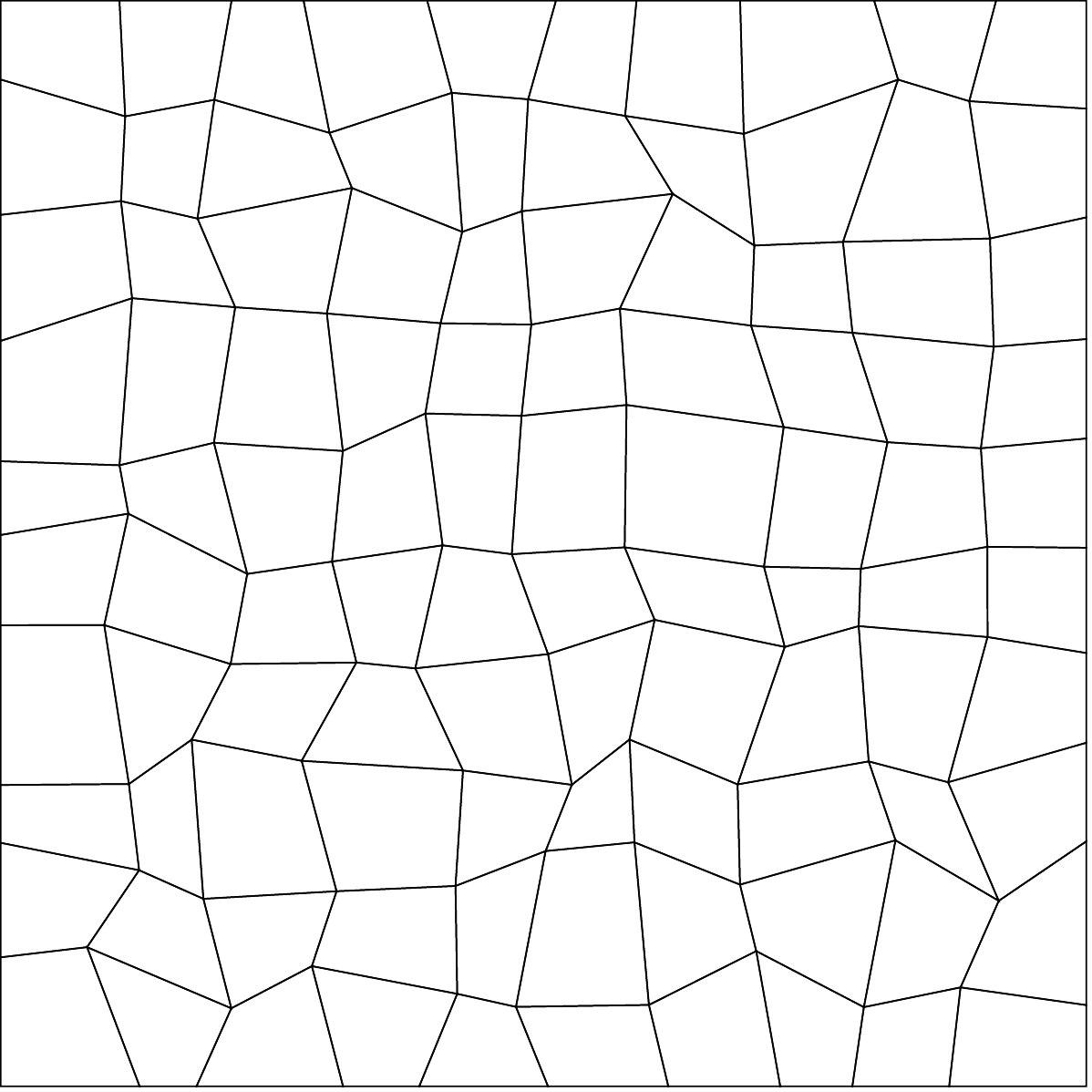}}
    \subcaption{$10\times10$ mesh; $h_\text{mean}=0.16$}
    \label{fig:mesh1}
   \end{subfigure}\hfill
   \begin{subfigure}{0.48\textwidth}
     \centering
    {\includegraphics[trim = 0cm 0cm 0cm 0cm, clip, scale=0.4]{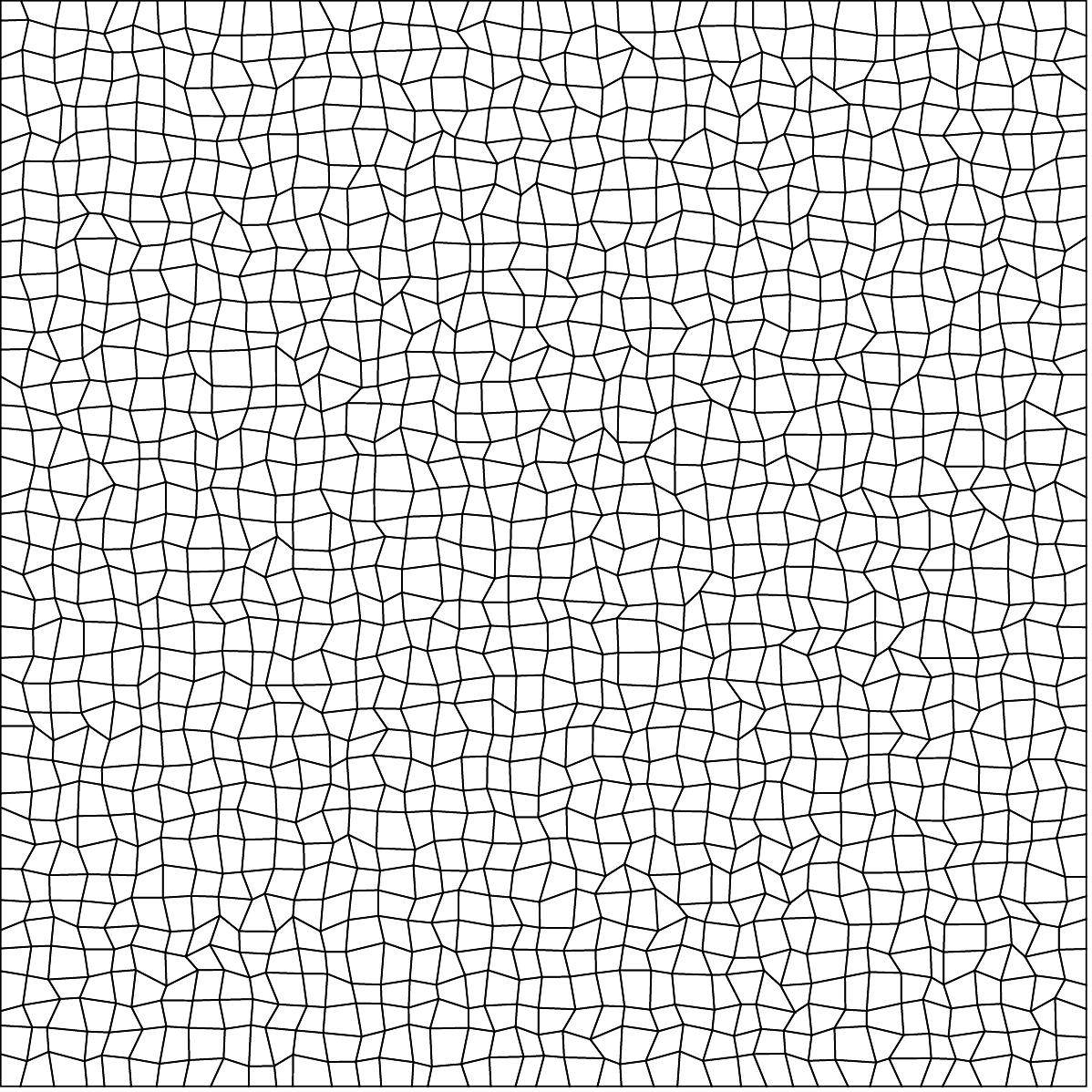}}
    \subcaption{$40\times40$ mesh; $h_\text{mean}=0.04$}
    \label{fig:mesh2}
   \end{subfigure}
   \caption{Finite element meshes used in the FEM and VEM
            to solve~\eqref{eq:problem2}.}
            \label{fig:meshes}
\end{figure}
%
%
\begin{figure}[!htb]
   \begin{subfigure}{0.48\textwidth}
     \centering
    {\includegraphics[trim = 1.6cm 1cm 1.6cm 1.8cm, clip, scale=0.4]{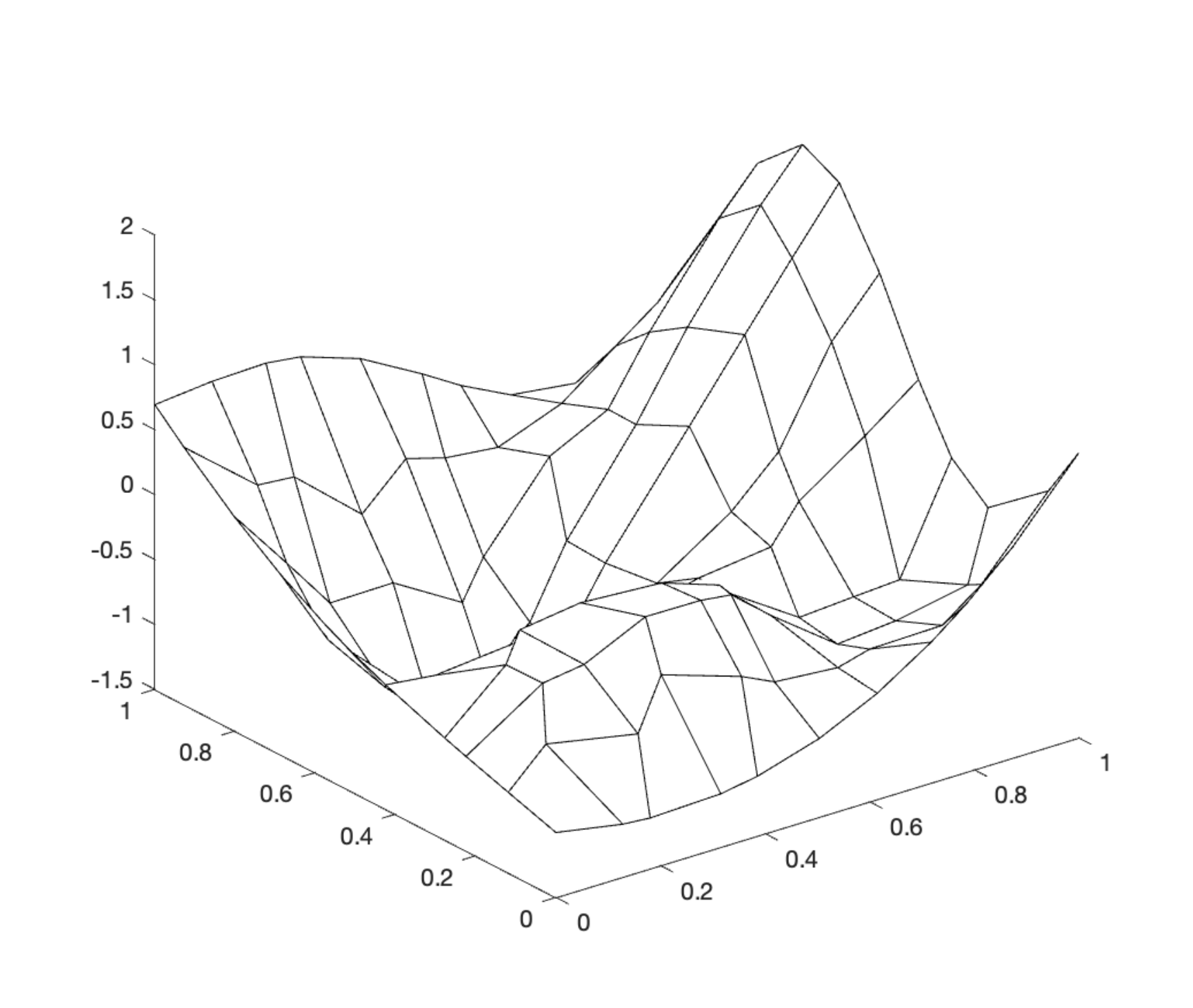}}
    \caption{FEM, $| u - u^h|_\infty = 3.6\times10^{-2}$}
    \label{fig:uh1ISO}
   \end{subfigure}\hfill
   \begin{subfigure}{0.48\textwidth}
     \centering
    {\includegraphics[trim = 1.6cm 1cm 1.6cm 1.8cm, clip, scale=0.4]{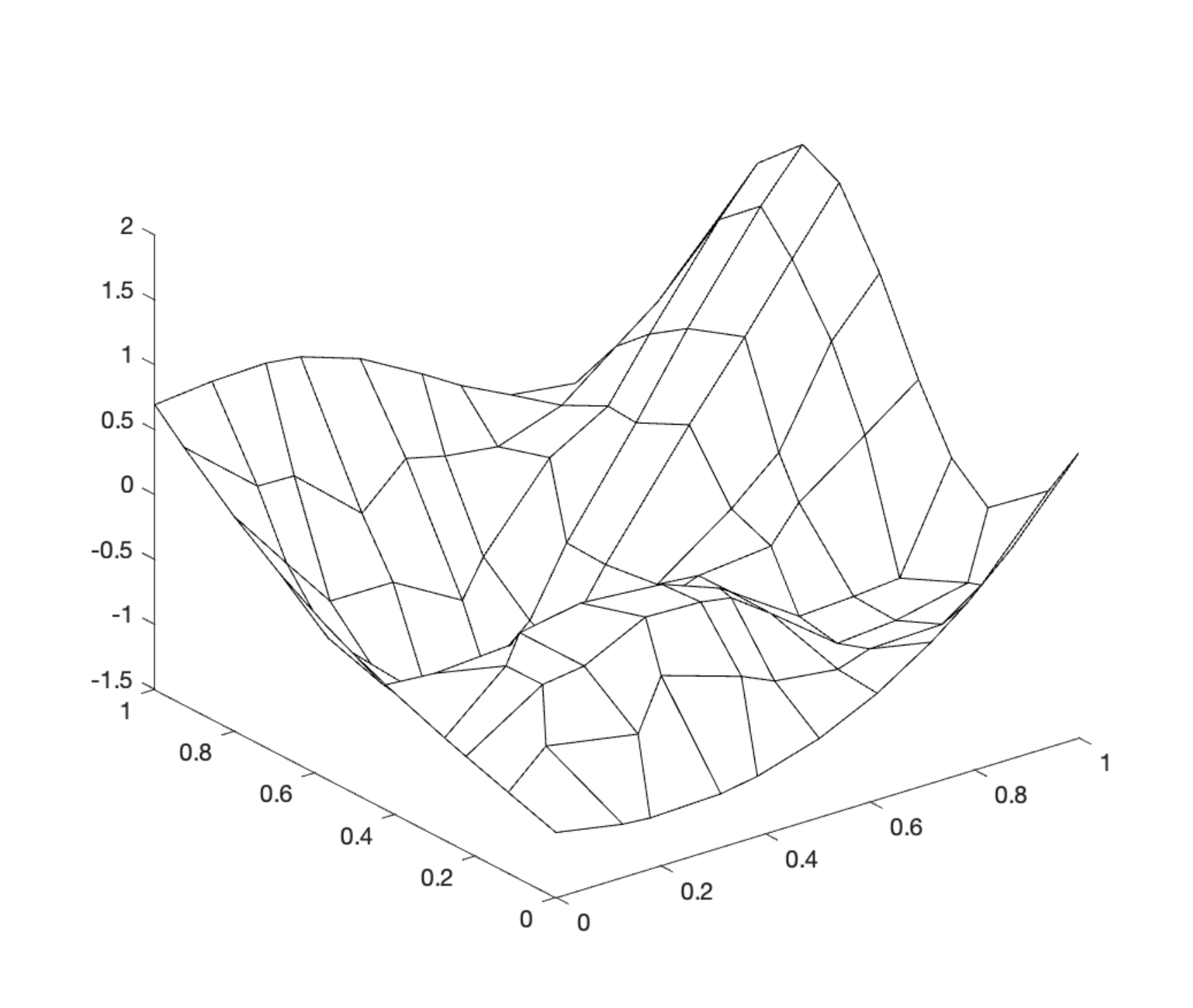}}
    \subcaption{VEM, $| u - u^h|_\infty = 2.4\times10^{-2}$}
    \label{fig:uh1VEM}
   \end{subfigure}
   \caption{Error at vertices for isoparametric FEM and VEM on the mesh shown in Fig.~\protect\ref{fig:mesh1}.}\label{fig:uh1ISOVEM}
\end{figure}
%
%
\begin{figure}[!htb]
   \begin{subfigure}{0.48\textwidth}
     \centering
    {\includegraphics[trim = 1.6cm 1cm 1.6cm 1.8cm, clip, scale=0.4]{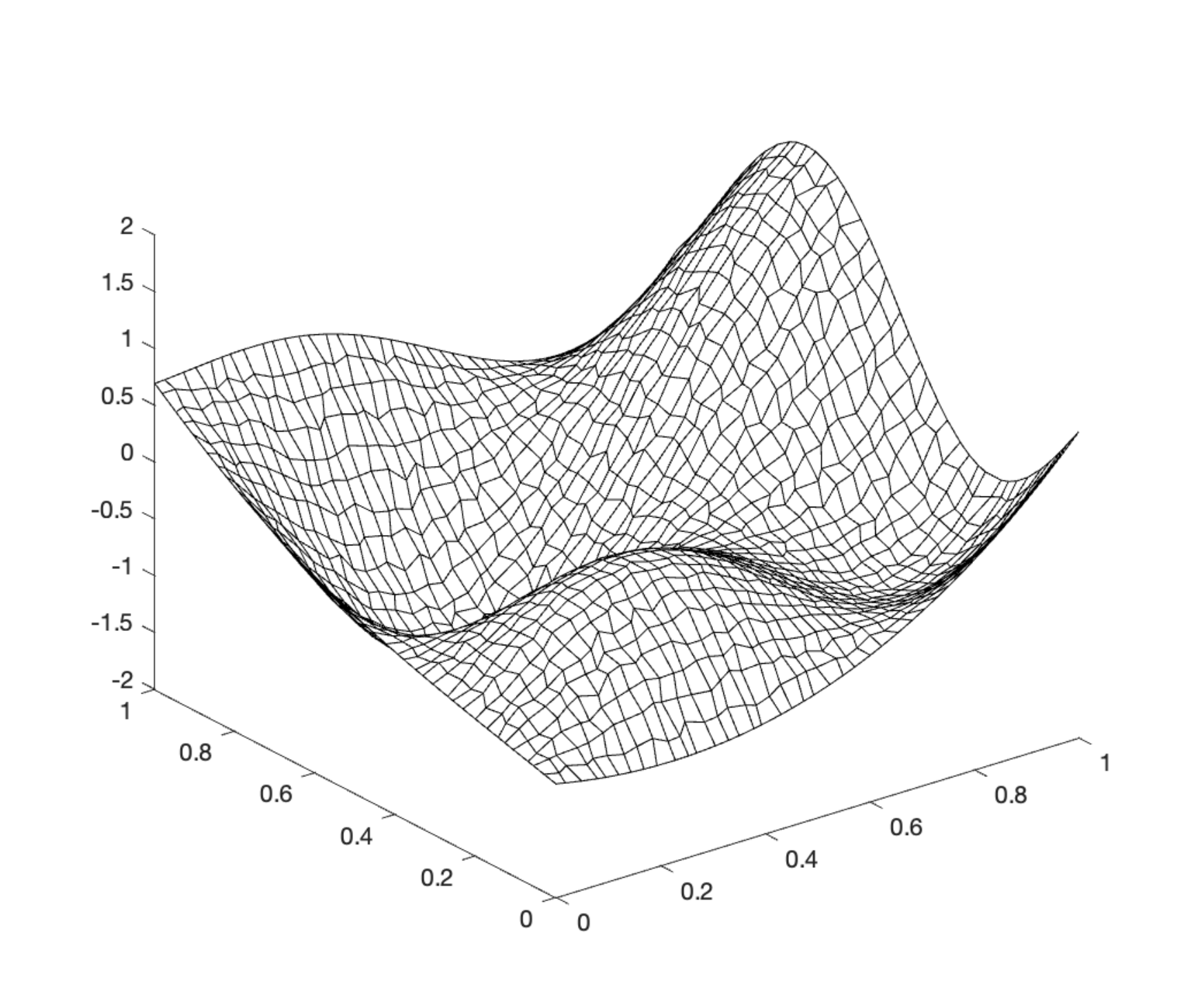}}
    \subcaption{FEM, $|u - u^h|_\infty = 2.6\times10^{-3}$}
    \label{fig:uh2ISO}
   \end{subfigure}\hfill
   \begin{subfigure}{0.48\textwidth}
     \centering
    {\includegraphics[trim = 1.6cm 1cm 1.6cm 1.8cm, clip, scale=0.4]{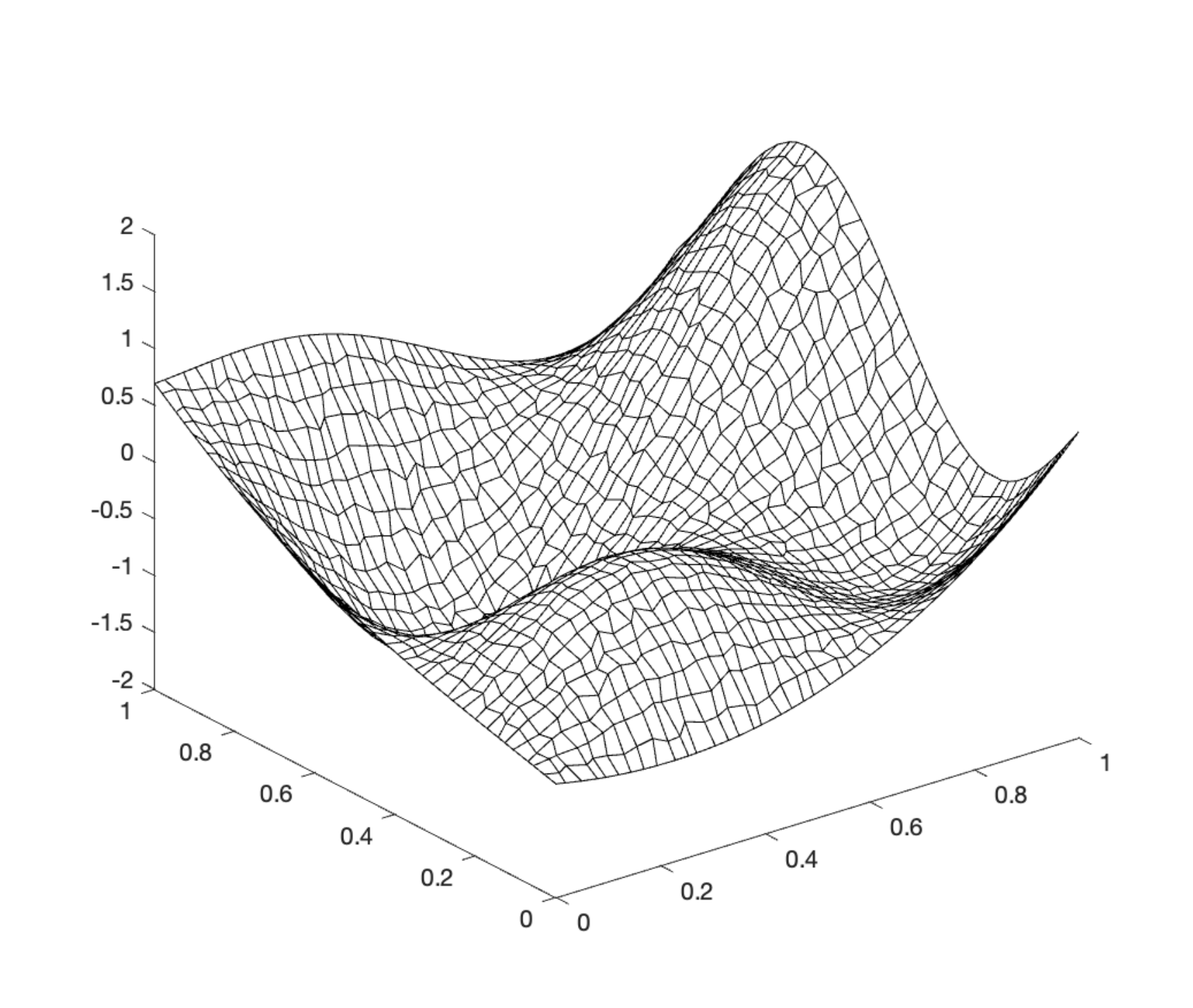}}
    \subcaption{VEM, $|u - u^h|_\infty = 2.5\times10^{-3}$}
    \label{fig:uh2VEM}
   \end{subfigure}
   \caption{Error at vertices for isoparametric FEM and VEM on the mesh shown in Fig.~\protect\ref{fig:mesh2}.}
   \label{fig:uh2ISOVEM}
   
\end{figure}

\section{Conclusions}
Our main conclusion from this study is that VEM is 
relatively insensitive with respect to the stabilization parameter
$\tauVEM$. A caveat in reaching this inference is 
that the mesh must be fine enough so
that the hourglass modes are no longer present
in the exact solution. As we have shown in the numerical experiment, coarse mesh accuracy and stability can be compromised if the hourglass function is a Dirichlet boundary condition that is exactly imposed. 
As part of future work, we plan to extend our analysis to distorted quadrilaterals, and to also include mean value coordinates and harmonic coordinates in our study.

\end{document}